\newtheorem{theorem}{Theorem}[section]
\newtheorem{prop}{Proposition}[section]
\newtheorem{lemma}[theorem]{Lemma}
\newtheorem{defn}{Definition}[section]
\theoremstyle{remark}
\newtheorem{remark}{Remark}[section]
\numberwithin{equation}{section}
\DeclareMathOperator\supp{\rm supp}
 \DeclareMathOperator\hdim{\dim_H}
 \DeclareMathOperator\pdim{\dim_P}
\DeclareMathOperator*{\esssup}{ess\,sup}
\def\ue{\mathop{\rm \overline{\dim}_e}}
\def\lowe{\mathop{\rm \underline{\dim}_e}}
\def\fdim{\mathop{\rm \dim_F}}
\def\updim{\mathop{\rm \overline{\dim}_P}}
\def\N{\mathbb{N}}
\def\R{\mathbb{R}}
\def\Z{\mathbb{Z}}
\begin{document}

\title[On the intermediate value property]{On the intermediate value property of spectra for a class of Moran spectral measures}

\author[Jinjun Li]{Jinjun Li}

 \address[Jinjun Li]{School of Mathematics and Information Science, Guangzhou University, Guangzhou, 510006, P.~R.~China}
\email{li-jinjun@163.com}

%\author[Meng Wu]{Meng Wu}
% \address[Meng Wu]{Department of Mathematical Sciences\\
%University of Oulu, P.O. Box 3000, 90014, Oulu, Finland}
%\email{meng.wu@oulu.fi}

\author{Zhiyi Wu}
\address[Zhiyi Wu]{School of Mathematics and Information Science, Guangzhou University, Guangzhou, 510006, P.~R.~China}
\email{zhiyiwu@126.com}

\subjclass[2010]{Primary 28A80; 42C05  }

\keywords{Moran measure; spectral measure; spectrum; Beurling dimension.}

%\date{January 1, 2004}
%----------additions
%\dedicatory{To my boss}
%%% ----------------------------------------------------------------------

\begin{abstract}
We prove that the Beurling dimensions of the spectra for a class of Moran spectral measures are between $0$ and their upper entropy dimensions. Moreover, for such a Moran spectral measure $\mu$, we show that the Beurling dimension for the spectra of $\mu$ has the intermediate value property: let $t$ be any value between $0$ and the upper entropy dimension of $\mu$, then there exists a spectrum whose Beurling dimension is $t.$ In particular, this result settles affirmatively a conjecture involving spectral Bernoulli convolution proposed by Fu, He and Wen in [J. Math. Pures Appl. 116 (2018), 105--131]. Furthermore, we prove that the set of the spectra whose Beurling dimensions are equal to any fixed value between $0$ and $\ue \mu$ has the cardinality of the continuum.
\end{abstract}

%%% ----------------------------------------------------------------------
\maketitle
%%% ----------------------------------------------------------------------
%\tableofcontents
\section{Introduction and main results}
Let $\mu$ be a Borel probability measure with compact support in $\R^d.$ We say that $\mu$ is a \textit{frame spectral measure} if there exist two constants $A, B>0$ and  a countable set $\Lambda\subset \R^d$, called a \textit{frame spectrum},  such that for every $f\in L^2(\mu)$, we have
\begin{equation}\label{frame}
A\|f\|^2\le \sum_{\lambda\in \Lambda}|\langle f, e_\lambda \rangle|^2\le B\|f\|^2,
\end{equation}
where $\langle \cdot, \cdot\rangle $ is the inner product in $L^2(\mu)$, $e_\lambda(x)=e^{2\pi i x\cdot\lambda}$ and $x\cdot\lambda=\sum_{i=1}^dx_i\lambda_i$ is the standard inner product in $\R^d$. Specially, we call $\mu$ a \textit{Parseval frame spectral measure} with \textit{Parseval frame spectrum} $\Lambda$ if $A=B=1$ in equation $\eqref{frame}$.  In this case, it is easy to see that the set $\{e_{\lambda}\}_{\lambda\in\Lambda}$ is an orthonormal basis for $L^2(\mu)$. Due to this fact, we also call $\mu$ a \textit{spectral measure} with \textit{spectrum} $\Lambda$. Many spectral measures have been found, see, for example \cite{AH,AHL,D,D16,DHL,DS,De,DC,DHLai19,F74,HL,LWY,L2,L1, LDL, LMW, JP} and references therein. For a singular spectral measure, generally speaking, its spectrum containing 0 is not unique, see \cite{ADH,D16,HTW, FHW18,FHW18,LWY, LWu, LWu2} and references therein. However, it is a big challenge to determine all the spectra for a given singular spectral measure. To the best of our knowledge, there is no one singular spectral measure whose spectra have been completely characterized. Therefore, it is interesting to investigate the structure of the spectra from possible aspects for a given singular spectral measure and find its spectra as many as possible. In this paper, we are interested in a class of singular spectral measures and show that the Beurling dimensions of their spectra possess an interesting intermediate value property. Moreover, we show that the set of the spectra whose Beurling dimensions are equal to any fixed value  has the cardinality of the continuum. These are the first results in this direction and our results show that this kind of spectral measures usually have spectra in abundance. Our results shed some light on the structure of the spectra for a singular spectral measure although we can not completely characterize them.

Let $\mathcal{B}=\{b_n\}_{n=1}^\infty$ and $\mathcal{D}=\{q_n\}_{n=1}^\infty$ be two sequences of integers with $2\le q_n<b_n$. For each $k\ge 1$, write $\mathcal{D}_k=\{0,1,\ldots, q_k-1\}$ and let
\begin{equation}\label{MM}
\mu=\mu_{\mathcal{D},\mathcal{B}}:=\delta_{b_1^{-1}\mathcal{D}_1}\ast \delta_{(b_1b_2)^{-1}\mathcal{D}_2}\ast\cdots \ast \delta_{(b_1b_2\cdots b_k)^{-1}\mathcal{D}_k}\ast\cdots,
\end{equation}
where $\delta_{\mathcal{E}}=\frac{1}{\#\mathcal{E}}\sum_{e\in \mathcal{E}}\delta_e$, $\#\mathcal{E}$ denotes the cardinality of the set $\mathcal{E}$, $\delta_e$ denotes the Dirac measure at $e$ and $\ast$ is the convolution operator.
We call $\mu$ a \textit{Moran measure} and it becomes the \textit{Cantor measure} $\mu_{q, b}$ studied in \cite{DHL} when $b_n=b$ and $q_n=q$ for all $n\ge 1.$ The support of $\mu$ can be represented in terms of digit expansion
\begin{equation}\label{supp}
\supp \mu=\left\{\sum_{k=1}^\infty \frac{d_k}{b_1\cdots b_k}:~d_k\in \mathcal{D}_{k}, k\ge 1\right\}.
\end{equation}
It can be regarded as a special Moran set introduced by Feng, Wen and Wu \cite{FWW}.

It is proved in \cite{AH}
that $\mu$ is a spectral measure if $q_n| b_n$ for all $n\ge 1$ and the authors also found a special spectrum for the Moran measure. In this paper, we always assume that the sequences $\mathcal{B}$ and $\mathcal{D}$ satisfy the following two technical conditions:
\begin{equation}\label{cond1}
2\le r_n:=\frac{b_n}{q_n}\in \N
\end{equation}
and
\begin{equation}\label{conn}
	M:=\sup_n \{b_n\}<\infty.
\end{equation}

%Clearly, the assumption \eqref{conn} implies that
%\begin{equation}\label{cond2}
%\text{$\lim\limits_{k\to \infty}\frac{\log b_{k+i}}{\log b_1\cdots b_k}=0$ for any $i\ge 1$,}
%\end{equation}
%which will be used in the sequel.
For example, if $q_n=q, b_n=b$ for all $n\ge 1$ and $2\le b/q\in \N$, then both \eqref{cond1} and \eqref{conn} are satisfied.

The assumption \eqref{cond1} guarantees the singularity of $\mu_{\mathcal{D}, \mathcal{B}}$. There have been some results showing that there is a close relationship between the dimensions of the frame spectral measure and the Beurling dimensions of its spectra. For example, Dutkay et al. \cite{DHSW} showed that the Beurling dimension of any frame spectrum of the self-similar frame spectral measure with the same contracting ratios, equal probability weights and satisfying the open set condition is bounded by the Hausdorff dimension of its support. He et al. \cite{HKTW} proved that this result remains true if the assumptions that the contracting ratios are same and probability weights are equal are removed. In general, He et al. \cite{HKTW} proposed the following conjecture: if $\mu$ is a frame spectral measure with spectrum $\Lambda$ and compact support $T$ then $$\dim \Lambda\le \hdim T,$$ where $\dim \Lambda$ and $\dim_H T$ denote the Beurling dimension of $\Lambda$ and the Hausdorff dimension of $T$, respectively.  However, Shi \cite{Shi} constructed a spectral measure such that there exists a spectrum of it whose Beurling dimension is strictly greater than the Hausdorff dimension of its support. That is, for a  frame spectral measure, the Hausdorff dimension of its support is not the natural upper bound of the Beurling dimensions of its spectra.  In fact, for any frame spectral measure, Shi \cite{Shi} proved that the Beurling dimensions of its frame spectra are bounded by its upper entropy dimension. On the other hand, Theorem 1.3 in \cite{ILLW} implies that the Fourier dimension of a frame spectral measure is the lower bound of the Beurling dimensions of its frame spectra. More precisely, if $\mu$ is a frame spectral measure with frame spectrum $\Lambda$, then
\begin{equation}\label{kongzhi}
\fdim \mu\le \dim \Lambda\le \ue \mu,
 \end{equation}
where $\fdim \mu $ and $\ue \mu$ denote the Fourier dimension and upper entropy dimension of $\mu$, respectively. The definitions of these dimensions will be given in the next section.

For the Moran measure defined in \eqref{MM}, we will show that $\fdim \mu=0$ (Proposition \ref{lb}) and $\ue \mu$ can be expressed by a formula associated with $\mathcal{B}$ and $\mathcal{D}$ (Proposition \ref{ub}). Therefore, it follows from \eqref{kongzhi} that the Beurling dimensions for the spectra of $\mu$ are between $0$ and $\ue \mu$. It is well known that a spectral measure may admit various spectra but we usually don't know how ``dense" they are. For example, does there exist some spectrum which is sparse that whose Beurling dimension is zero? In fact,  Dai, He and Lai \cite{DHL} presented some spectra with zero Beurling dimension for some kind of Cantor measures $\mu_{q,b}$, and An and Lai \cite{AL} proved that there is spectrum with zero Beurling dimension for general self-affine spectral measure. These results are somewhat surprising since Lebesgue measure has no spectrum with Beurling dimension with zero due to the classical result of Landau \cite{Lan}.

It is natural to ask that whether there are some spectra whose Beurling dimensions are equal to  other values? In this paper, we show that the Beurling dimension for the spectra of $\mu$ has the following intermediate value property.

\begin{theorem}\label{MR1}
Let $\mu=\mu_{\mathcal{D},\mathcal{B}}$ be the Moran measure defined in \eqref{MM} satisfying \eqref{cond1} and \eqref{conn}. For any $t\in [0, \ue \mu]$, there exists a spectrum $\Lambda_t$ of $\mu$ such that $\dim \Lambda_t =t.$
\end{theorem}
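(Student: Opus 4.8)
The plan is to leverage what is already known and to build, by hand, a sufficiently rich family of spectra. By \eqref{kongzhi} together with Propositions \ref{lb} and \ref{ub}, every spectrum of $\mu$ has Beurling dimension in $[0,\ue\mu]$, so only the realisation of an arbitrary value $t$ is at issue. I produce such a spectrum by \emph{delaying the digits} of the canonical Hadamard triple spectrum of $\mu$: given a sequence $d=(d_n)_{n\ge1}$ of non-negative integers, set $e_n:=r_n(1+q_nd_n)$ and
\[
\Lambda(d):=\Bigl\{\ \sum_{n\ge1}(b_1\cdots b_{n-1})\,e_n j_n\ :\ j_n\in\{0,1,\dots,q_n-1\},\ \text{with } j_n=0 \text{ for all but finitely many }n\ \Bigr\}.
\]
When $d\equiv0$ this is the usual spectrum ($e_n=r_n$). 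I claim that every $\Lambda(d)$ is a spectrum of $\mu$ with
\[
\dim\Lambda(d)\ =\ \limsup_{N\to\infty}\frac{\log(q_1\cdots q_N)}{\log(b_1\cdots b_N)+\log\bigl(1+q_Nd_N\bigr)}\,,
\]
and that, as $d$ varies, the right-hand side attains every value of $[0,\ue\mu]$.

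\textbf{Step 1 ($\Lambda(d)$ is a spectrum).} Because $b_n=q_nr_n$, the triple $(\mathcal{D}_n,\,r_n\mathcal{D}_n,\,b_n)$ is a Hadamard triple, the associated matrix being the $\mathbb{Z}_{q_n}$-Fourier matrix. Since $1+q_nd_n\equiv1\pmod{q_n}$, the set $e_n\mathcal{D}_n$ coincides with $r_n\mathcal{D}_n$ modulo $b_n$; hence $(b_1\cdots b_{n-1})\,e_n\mathcal{D}_n$ is a spectrum of the single factor $\delta_{(b_1\cdots b_n)^{-1}\mathcal{D}_n}$ and contains $0$. By \eqref{cond1}, the digit expansion in \eqref{supp} is unique and $\supp\mu$ separates at every level — the diameter of the tail beyond level $n$ is strictly less than the gap $(b_1\cdots b_n)^{-1}$ between adjacent level-$n$ points, because $\sum_{m>n}(q_m-1)/(b_{n+1}\cdots b_m)<1$ whenever all $r_m\ge2$. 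Therefore $L^2(\mu)$ is the tensor product of the spaces $L^2\bigl(\delta_{(b_1\cdots b_n)^{-1}\mathcal{D}_n}\bigr)$, and the standard argument for Moran spectral measures (as in \cite{AH}) shows that the direct sum of the level spectra — which is precisely $\Lambda(d)$, the defining sums being distinct since a coincidence at the first differing index $n_0$ would force $q_{n_0}\mid(j_{n_0}-j_{n_0}')$ — is a spectrum of $\mu$. Making this last (completeness) step work uniformly in $d$ is what I expect to be the main obstacle: when $d$ grows quickly the naive ``truncate and pass to the limit'' argument breaks down, so one should keep the level-by-level product structure and reduce completeness of $\Lambda(d)$ to that of the (same-type) tail measures.

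\textbf{Step 2 (the Beurling dimension).} Assume $(d_n)$ non-decreasing (all that is needed). At level $n$ the $q_n$ points $(b_1\cdots b_{n-1})e_n j_n$ are evenly spaced and span a window of length $\asymp(b_1\cdots b_n)(1+q_nd_n)$; a ball of radius $R$ can therefore contain all $q_n$ of them only once $R\gtrsim(b_1\cdots b_n)(1+q_nd_n)$, and these thresholds are essentially increasing in $n$. It follows that $\sup_x\#\bigl(\Lambda(d)\cap B(x,R)\bigr)\asymp\prod\{q_n:(b_1\cdots b_n)(1+q_nd_n)\le R\}$; evaluating the $\limsup$ at the successive thresholds yields the displayed formula, the boundedness \eqref{conn} ensuring that intermediate radii do not increase it. For $d\equiv0$ this gives $\dim\Lambda(0)=\limsup_N\log(q_1\cdots q_N)/\log(b_1\cdots b_N)=\ue\mu$ by Proposition \ref{ub}, which — with \eqref{kongzhi} — settles the endpoint $t=\ue\mu$.

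\textbf{Step 3 (interpolation).} Put $S_N=\log(q_1\cdots q_N)$, $T_N=\log(b_1\cdots b_N)$, so $\ue\mu=\limsup_N S_N/T_N$ and, by \eqref{conn}, $\ue\mu\ge\log 2/\log M>0$. For $t=0$ take $d_n=2^{2^n}$: then $\log(1+q_Nd_N)\ge 2^N\log 2$ dwarfs $T_N\le N\log M$ and the formula gives $0$. For $0<t<\ue\mu$, choose $(d_n)$ non-decreasing and large enough that $1+q_Nd_N$ stays within a bounded factor of $\exp\bigl(\max(0,\,\max_{m\le N}(S_m/t-T_m))\bigr)$ — feasible since $1+q_Nd_N$ ranges over an arithmetic progression of step $q_N\le M$. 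Since $S_m/T_m\to\ue\mu>t$ along a subsequence and $S_m\to\infty$, the running maximum $\max_{m\le N}(S_m/t-T_m)$ is unbounded and is attained for infinitely many $N$; at each such $N$, $T_N+\log(1+q_Nd_N)=S_N/t+O(1)$, so $S_N/\bigl(T_N+\log(1+q_Nd_N)\bigr)\to t$, whereas for every $N$ one has either $\log(1+q_Nd_N)\ge S_N/t-T_N$ (ratio $\le t$) or $S_N/t-T_N<0$ (ratio $S_N/T_N<t$). Hence the $\limsup$ of the ratio is exactly $t$, i.e.\ $\dim\Lambda(d)=t$, and $\Lambda_t:=\Lambda(d)$ works. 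Apart from Step 1, the only delicate point is this tuning, which must cope with $S_N$ and $T_N$ having bounded but non-constant increments.
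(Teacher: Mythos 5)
Your Step 1 contains the essential gap, and you have correctly identified where it is but not closed it. Checking that each level gives a compatible pair (that $e_n\mathcal{D}_n\equiv r_n\mathcal{D}_n\pmod{b_n}$, hence a Hadamard triple) only yields \emph{orthogonality} of $\{e_\lambda\}_{\lambda\in\Lambda(d)}$ in $L^2(\mu)$; it does not yield completeness, and ``the standard argument for Moran spectral measures (as in \cite{AH})'' does not cover your sets. The two criteria actually available both fail or are unverified precisely in the regime you need. Strichartz's Theorem 2.8 in \cite{Str00} (which the paper invokes, for Theorem \ref{MR2}, only after checking that the normalized partial spectra $T_n$ lie in $[-1,1]$ and are uniformly separated from $\mathcal{Z}(\widehat{\mu}_n)$) requires the sets $(b_1\cdots b_n)^{-1}\Lambda_n(d)$ to be uniformly bounded; in your construction their diameter is of order $d_n$, which must tend to infinity rapidly to push the dimension below $\ue\mu$. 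The Dai--Sun criterion (Theorem \ref{cri}) requires realizing $\Lambda(d)$ as $\Lambda(\tau)$ for a maximal tree mapping whose branches carry a uniformly bounded number of nonzero labels; writing $(b_1\cdots b_{n-1})e_nj_n=(b_1\cdots b_{n-1})r_nj_n+(b_1\cdots b_n)d_nj_n$, the second term is a ``carry'' whose digit expansion spreads over roughly $\log d_n$ higher levels, so the uniform bound is violated when $d_n\to\infty$. Your closing remark that one should ``reduce completeness of $\Lambda(d)$ to that of the tail measures'' is a program, not a proof, and it is exactly the hard part: in \cite{DHL} the analogous completeness statement for delayed-digit spectra of $\mu_{q,b}$ requires a delicate equicontinuity argument for $Q_\Lambda(\xi)=\sum_\lambda|\widehat\mu(\xi+\lambda)|^2$. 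As written, Steps 2 and 3 compute the Beurling dimension of a set that has not been shown to be a spectrum.

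For contrast, the paper takes a dual route that sidesteps this entirely: instead of keeping all $q_n$ digits at level $n$ and stretching their spacing by $1+q_nd_n$, it keeps the canonical spacing $r_nb_1\cdots b_{n-1}$ and \emph{deletes} digits, replacing $\Sigma_{q_k}$ by $\Sigma_{q_k'}$ with $q_k'\le q_k$ chosen by Lemma \ref{mid} so that $\limsup_k\log(q_1'\cdots q_k')/\log(b_1\cdots b_k)=t$; the deleted branches are reattached far away (via $s_n=n$), producing a lacunary, hence zero-dimensional, irregular part. The resulting set is $\Lambda(\tau)$ for a maximal tree mapping with at most one nonzero tail label per branch, so Theorem \ref{cri} certifies spectrality at once, and the dimension follows from Proposition \ref{Bdims} plus finite stability. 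A secondary point: even granting spectrality, your claimed dimension formula does not follow directly from Proposition \ref{Bdims}, since with $t_k=r_k(1+q_kd_k)$ the hypothesis \eqref{cfd} can fail for large $M$ (the level-$k$ block's diameter is comparable to $(1+q_kd_k)b_1\cdots b_k$, which need not be dominated by $t_{k+1}b_1\cdots b_k$ when $d_{k+1}=d_k$); this is repairable with a monotonicity assumption and a separate counting argument, but it needs to be done. The spectrality gap, however, is fundamental to your approach and would have to be filled by an argument of the \cite{DHL} type before the rest can stand.
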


\begin{remark}
We remark that condition \eqref{conn} guarantees that we can calculate the upper local dimension, and further the upper entropy dimension of $\mu$ by the formula in \cite{LW}, see Proposition \ref{ub}. When $q_n=q,b_n=b$ for all $n\geq1$, $\ue\mu$ coincides with $\hdim (\supp \mu)$ (see Proposition \ref{ub} in Section 2). In particular, let $q_n=2,b_n=2k$ for some $k\in\N$, then $\mu$ becomes the spectral Bernoulli convolution and Theorem \ref{MR1} settles affirmatively a conjecture proposed by Fu, He and Wen in \cite[Conjecture 5.3]{FHW18}.
\end{remark}

Furthermore, we can describe the cardinality of the spectra whose Beurling dimensions are equal to a fixed value between $0$ and $\ue \mu.$ More precisely, we have the following  surprising result.
\begin{theorem}\label{MR2}
 Let $\mu=\mu_{\mathcal{D},\mathcal{B}}$ be the Moran measure defined in \eqref{MM} satisfying \eqref{cond1} and \eqref{conn}.
  For any $t\in [0, \ue \mu]$, the level set
 \[
 L_t:=\{\Lambda:  \text{$\Lambda$ is a spectrum of $\mu$ and $\dim \Lambda =t$} \}
 \]
 has the cardinality of the continuum.
\end{theorem}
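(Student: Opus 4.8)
The plan is to prove the two inequalities $2^{\aleph_0}\le|L_t|\le2^{\aleph_0}$, where $2^{\aleph_0}$ is the cardinality of the continuum. The upper bound needs nothing special about $\mu$: a spectrum $\Lambda$ indexes an orthonormal basis $\{e_\lambda\}_{\lambda\in\Lambda}$ of the separable Hilbert space $L^2(\mu)$, hence is a countable subset of $\R$, and since $|\R^{\N}|=2^{\aleph_0}$ there are at most $2^{\aleph_0}$ countable subsets of $\R$; thus $|L_t|\le2^{\aleph_0}$. (If one does not insist that a spectrum contain $0$, the reverse inequality is also easy: multiplying an orthonormal basis $\{e_\lambda\}_{\lambda\in\Lambda}$ by the unimodular function $e_\gamma$ shows $\Lambda+\gamma$ is again a spectrum, and $\dim(\Lambda+\gamma)=\dim\Lambda$, so the translates of a single $\Lambda_t$ already form a continuum in $L_t$. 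The substance of Theorem \ref{MR2} — and what the argument below delivers — is a continuum of \emph{pairwise non-translate} spectra, which in particular covers the convention in which spectra are normalized to contain $0$, since under that convention all translates of a fixed $\Lambda$ collapse to one set.)

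For the substantive lower bound I would go back into the construction behind Theorem \ref{MR1}. That construction builds $\Lambda_t$ stage by stage: at the $k$-th stage (possibly after first regrouping finitely many consecutive indices) one picks a finite set $E_k\subset\Z$ from an admissible family $\mathcal F_k$, where admissibility encodes a compatible-pair (Hadamard) relation between $E_k$ and the corresponding digit set modulo the corresponding modulus, together with the density/entropy bookkeeping that pins the Beurling dimension to $t$; one then takes $\Lambda_t=\{\sum_k(b_1\cdots b_{k-1})e_k:\ e_k\in E_k,\ e_k=0\text{ for all but finitely many }k\}$, or the analogue used there. I expect two structural features. First, because $r_n=b_n/q_n\ge2$ for every $n$, there is an infinite set $S$ of stages at which $\mathcal F_k$ contains at least two distinct admissible sets $E_k^0\ne E_k^1$, both containing $0$ (the slack $r_k\ge2$ is exactly what supplies this latitude, most transparently after grouping a few consecutive levels so that the grouped digit set admits several equinumerous complements); moreover every member of $\mathcal F_k$ has the same cardinality, so the cardinality profile $(\#E_k)_k$, and hence the counting function $R\mapsto\#(\Lambda\cap B(0,R))$, is the same whichever admissible sets are used. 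Second, the verification that the resulting set is a spectrum and the computation of its Beurling dimension depend only on this common cardinality profile together with the local compatible-pair condition — data unaffected by the particular selections.

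Granting these features, for each $\omega=(\omega_k)_{k\in S}\in\{0,1\}^S$ I would run the construction using $E_k^{\omega_k}$ at every stage $k\in S$ and the original set at every stage $k\notin S$, obtaining $\Lambda_\omega$. By the first feature $\dim\Lambda_\omega=t$, by the second $\Lambda_\omega$ is a spectrum of $\mu$, so $\Lambda_\omega\in L_t$. Finally I would check injectivity of $\omega\mapsto\Lambda_\omega$. The hypothesis $r_n\ge2$ keeps each digit in the range $0\le e_k<b_k$ with room to spare, so no carrying occurs and every element of $\Lambda_\omega$ has a \emph{unique} expansion $\sum_k(b_1\cdots b_{k-1})e_k$ with $e_k\in E_k^{\omega_k}$. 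Hence if $\omega\ne\omega'$ and $k_0\in S$ satisfies $\omega_{k_0}\ne\omega_{k_0}'$, choosing $e\in E_{k_0}^{\omega_{k_0}}\setminus E_{k_0}^{\omega_{k_0}'}$ and the zero digit at every other stage produces an element of $\Lambda_\omega$ whose (unique) mixed‑radix expansion forces $e$ at stage $k_0$, so it cannot lie in $\Lambda_{\omega'}$; thus $\Lambda_\omega\ne\Lambda_{\omega'}$. Therefore $|L_t|\ge|\{0,1\}^S|=2^{\aleph_0}$, and together with the upper bound this yields $|L_t|=2^{\aleph_0}$.

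The main obstacle lies entirely in the second paragraph: one must make the two ``features'' precise inside the actual proof of Theorem \ref{MR1} — above all, exhibiting the infinite set $S$ of stages carrying genuinely distinct admissible sets (this is where $r_n\ge2$ must be used, and one must check it persists at the endpoints $t=0$ and $t=\ue\mu$), and confirming that at those stages the Beurling dimension really is read off from choice‑independent data. Everything after that — the spectrum property of each $\Lambda_\omega$, the value of its Beurling dimension, the injectivity of the encoding, and the cardinal arithmetic — is routine, so the proof of Theorem \ref{MR2} should largely be a careful re‑reading and mild repackaging of the proof of Theorem \ref{MR1}.
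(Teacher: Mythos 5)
Your high-level strategy --- make countably many independent binary choices that leave both the spectrum property and the Beurling dimension unchanged, then prove injectivity of $\omega\mapsto\Lambda_\omega$ and conclude $|L_t|\ge 2^{\aleph_0}$ --- is exactly the paper's strategy, and your injectivity and cardinality bookkeeping are fine. The problem is that the two ``structural features'' you grant yourself are precisely the mathematical content, and one of them is not true as stated while the other conflicts with the machinery that produces the intermediate-dimension spectra. First, the claim that the Beurling dimension ``depends only on the common cardinality profile'' is false in general: $\dim\Lambda$ is $\limsup_{h\to\infty}\sup_{x}\log\#(\Lambda\cap B(x,h))/\log h$, a statement about \emph{all} windows $B(x,h)$, not about the counting function at the origin. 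Two admissible digit sets of equal cardinality can distribute points very differently, and this is exactly where the paper has to work in its $t=\ue\mu$ case (the upper bound $\dim\Lambda^w\le\ue\mu$ is obtained by sandwiching $\Lambda^w_k$ between $\Lambda^{1^\infty}_k$ and $\Lambda^{(-1)^\infty}_k$, not by counting alone). Second, for $0<t<\ue\mu$ the spectra are produced via the Dai--Sun maximal tree mapping (Theorem \ref{cri}), whose condition (ii) forces the label of the digit $\delta_n$ to be congruent to $\delta_n$ modulo $q_n$. The most natural ``second admissible set'' supplied by the slack $r_n\ge2$, namely $-r_n\Sigma_{q_n}$, violates this congruence (one would need $2\sigma_n\equiv0\pmod{q_n}$), so you cannot simply swap digit sets at infinitely many stages and re-invoke the same spectral criterion; and if you instead pick a residue-compatible alternative (e.g.\ $\sigma_n\mapsto\sigma_n+q_n$), Proposition \ref{Bdims} no longer applies verbatim and the dimension computation must be redone. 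You flag these verifications as ``the main obstacle,'' but they are the theorem, so the proposal has a genuine gap.

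For comparison, the paper sidesteps both difficulties by varying a \emph{different} degree of freedom depending on $t$. For $t\in[0,\ue\mu)$ it keeps the dimension-carrying regular part $\Lambda_t^1$ completely fixed and perturbs only the sparse irregular part, choosing $s_n\in\{n^2,n^2+1\}$ for each of the infinitely many $n\notin\Gamma_t$: every such choice still satisfies the Dai--Sun criterion, and every resulting $\Lambda_t^2$ is lacunary, hence of Beurling dimension $0$, so $\dim\Lambda_t=t$ by finite stability regardless of the choice. For $t=\ue\mu$ it uses the sign flips $w\in\{-1,1\}^\infty$ you have in mind, but verifies spectrality through Strichartz's compatible-pair theorem rather than the tree labeling, and proves $\dim\Lambda^w=\ue\mu$ by an explicit two-sided counting estimate. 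If you want to salvage your single uniform construction, the minimal fix is to abandon the digit-set variation for $t<\ue\mu$ and instead perturb the zero-dimensional part of the spectrum, which is exactly what the paper does.
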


%In fact, for $0\le t< \ue \mu$ we will show that the set $L_t$ has the cardinality of the continuum. We can only prove the set $L_{\ue \mu}$ contains a countable set at present. But we conjecture that this set also has the cardinality of the continuum.
%{\bf Qu}: find the upper bound for $D_r^+(\Lambda)$ when $r=\ue \mu$. I think that the $D_r^+(\Lambda)$ has also the intermediate value property.
\section{Preliminaries}
This section is devoted to the definitions and some properties of Fourier dimension and upper entropy dimension for a measure, and the Beurling dimension of a countable set in $\R^d$.
\subsection{Fourier dimension}
The Fourier dimension of a measure is a measurement of exactly how fast the Fourier transformation of the measure decays and it can reflect some intrinsic  properties of the measure. More precisely, for a Borel probability measure on $\R^d$, its \textit{Fourier dimension} is defined as
\[
\begin{split}
\fdim \mu=\sup\Big\{0\le s\le d: ~ &\text{there exists a constant $C>0$}\\
 &\text{such that $|\widehat{\mu}(\xi)|\le C|\xi|^{-s/2}$ for all $\xi \in \mathbb{R}^d$}\Big\},
\end{split}
\]
where $\widehat{\mu}(\xi)=\int e^{-2\pi i\xi \cdot x}d\mu(x)$ is the Fourier transformation of $\mu$.

If $\mu$ is a frame spectral measure with frame spectrum $\Lambda$, then Theorem 1.3 in \cite{ILLW} implies that
\[
\dim \Lambda\ge \fdim \mu.
\]
That is, the Fourier dimension of a frame spectral measure is the lower bound of the Beurling dimension of its spectra.  We can obtain the following result by using almost the same arguments in \cite{CKL}. However, we present the complete proof here for the convenience of the reader.
\begin{prop}\label{lb}
Let $\mathcal{B}$ and $\mathcal{D}$ satisfy $r_n\ge 2$ for all $n\ge 1$ and let $\mu=\mu_{\mathcal{D},\mathcal{B}}$ be the Moran measure defined in \eqref{MM}. Then $\fdim \mu=0.$
\end{prop}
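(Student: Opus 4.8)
The plan is to show $\widehat{\mu}(\xi)$ fails to decay like $|\xi|^{-s/2}$ for any $s>0$ by exhibiting an explicit sequence $\xi_N\to\infty$ along which $|\widehat{\mu}(\xi_N)|$ stays bounded below by a positive constant. Since $\mu$ is the infinite convolution in \eqref{MM}, its Fourier transform factors as
\[
\widehat{\mu}(\xi)=\prod_{k=1}^{\infty}\widehat{\delta_{(b_1\cdots b_k)^{-1}\mathcal{D}_k}}(\xi)
=\prod_{k=1}^{\infty} M_{q_k}\!\left(\frac{\xi}{b_1\cdots b_k}\right),
\]
where $M_{q}(x)=\frac{1}{q}\sum_{j=0}^{q-1}e^{-2\pi i j x}$ is the usual Dirichlet-type kernel attached to the digit set $\{0,1,\dots,q-1\}$. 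The key elementary facts are that $M_q(0)=1$, that $M_q$ is continuous, and more precisely that $|M_q(x)|\ge 1-C|x|$ for $|x|$ small, with a constant $C$ depending only on an upper bound for $q$ — and here $q_k<b_k\le M$ is uniformly bounded by \eqref{conn}'s analogue (note the proposition only assumes $r_n\ge 2$, but $q_n<b_n$ always holds and one can still localize since each factor is evaluated near $0$).

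The main idea is to choose $\xi_N$ to be a product of the scaling factors so that all but finitely many tail factors are evaluated very close to $0$. Concretely, set $\xi_N=b_1b_2\cdots b_N$. Then for $k\le N$ the argument $\xi_N/(b_1\cdots b_k)=b_{k+1}\cdots b_N$ is an integer, so $M_{q_k}$ is evaluated at an integer; here one uses that $q_k\mid b_k$... which is \emph{not} assumed, so instead I would use $r_k=b_k/q_k\ge 2\in\N$: write $b_{k+1}\cdots b_N = q_{k+1}r_{k+1}b_{k+2}\cdots b_N$, hence for the factor indexed $k$ we evaluate $M_{q_k}$ at $b_{k+1}\cdots b_N/q_k$, which need not be an integer. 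This is the real subtlety, and it is exactly the point handled in \cite{CKL}: one shows that the \emph{product} of the first $N$ factors at $\xi_N$ does not decay, by a careful lower bound exploiting that $b_n/q_n$ is an integer $\ge 2$, equivalently that the digit set $\mathcal{D}_k$ is "compatible" with the base in the sense needed for $|M_{q_k}(\xi_N/(b_1\cdots b_k))|$ to be bounded away from $0$ uniformly in $k\le N$. For the tail $k>N$, the argument $\xi_N/(b_1\cdots b_k)=(b_{N+1}\cdots b_k)^{-1}\le 2^{-(k-N)}$ tends to $0$ geometrically, so $\prod_{k>N}|M_{q_k}(\xi_N/(b_1\cdots b_k))|\ge \prod_{k>N}(1-C\,2^{-(k-N)})\ge c_0>0$ with $c_0$ independent of $N$ (convergent product since the terms are summable).

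Combining, $|\widehat{\mu}(\xi_N)|\ge c_0\cdot(\text{lower bound for the first }N\text{ factors})$, and the \cite{CKL}-type estimate gives that this first product is itself bounded below uniformly in $N$; so $|\widehat{\mu}(\xi_N)|\ge \delta>0$ while $\xi_N\to\infty$. Therefore for every $s>0$ the inequality $|\widehat{\mu}(\xi_N)|\le C|\xi_N|^{-s/2}$ fails for large $N$, which forces $\fdim\mu=0$. The main obstacle, as flagged above, is the uniform-in-$k$ lower bound for the head factors $\prod_{k\le N}|M_{q_k}(b_{k+1}\cdots b_N/q_k)|$; this is where conditions $r_n\ge 2\in\N$ and the boundedness of $\{b_n\}$ enter, and it is precisely the computation carried out in \cite{CKL} that I would adapt essentially verbatim, tracking constants to confirm they do not degenerate as $N\to\infty$.
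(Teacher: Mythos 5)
Your overall strategy --- evaluate $\widehat{\mu}$ along $\xi_N=b_1\cdots b_N$ and show it stays bounded away from $0$ --- is a legitimate route and genuinely different from the paper's. The paper never estimates the infinite product: it invokes the classical criterion (from \cite{CKL}, \cite{Ly}) that if $\overline{\bigcup_k n_k E}\neq[0,1]$ for some increasing integer sequence $\{n_k\}$, then no probability measure on $E$ is Rajchman, and then simply checks that $n_k(\supp\mu)\subset[0,c]$ with $c<1$ uniformly for $n_k=b_1\cdots b_k$, using only $b_k\ge4$ and $r_k\ge2$. Your direct computation can be made to work, but as written it has two concrete problems.

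First, the ``real subtlety'' you identify in the head factors is spurious and rests on a miscalculation. With your own definition $M_q(x)=\frac1q\sum_{j=0}^{q-1}e^{-2\pi ijx}$, the $k$-th factor at $\xi_N$ is $M_{q_k}\bigl(\xi_N/(b_1\cdots b_k)\bigr)=M_{q_k}(b_{k+1}\cdots b_N)$; the extra division by $q_k$ you introduce has no source, and $M_q$ at an integer equals $1$ exactly, with no compatibility or divisibility hypothesis needed. So the head product is identically $1$, and the appeal to a \cite{CKL}-type estimate ``adapted essentially verbatim'' for this step is deferring a step that is trivial --- while leaving the one step that actually needs care unexamined.

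Second, that step is the tail. The proposition does not assume \eqref{conn}, so you may not use $b_k\le M$ to control the constant in $|M_q(x)|\ge1-Cq|x|$; moreover, even granting it, the linear bound fails at the first tail factor: there $q_{N+1}|x|=q_{N+1}/b_{N+1}=1/r_{N+1}$ can be as large as $1/2$, so $1-\pi q_{N+1}|x|$ can be negative and the product $\prod_{k>N}(1-C2^{-(k-N)})$ is not obviously positive. The fix is to use $|M_q(x)|=\bigl|\sin(\pi qx)/(q\sin\pi x)\bigr|\ge\sin(\pi qx)/(\pi qx)$ for $0<qx\le1/2$, together with $q_k/(b_{N+1}\cdots b_k)=r_k^{-1}(b_{N+1}\cdots b_{k-1})^{-1}\le\tfrac12\cdot4^{-(k-1-N)}$, which makes the tail product bounded below by a constant independent of $N$ without any boundedness assumption on $\{b_n\}$. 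With these two repairs your argument is complete; as submitted, the key lower bound is neither correctly set up nor proved.
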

\begin{proof}
It is sufficient to show that the Fourier transformation of $\mu$ does not decay. We will prove it by employing the following classical method. For a set $E\subset [0,1]$ and an integer $n$, we define
\[
n E=\{nx(\bmod1):x\in E\}.
\]
If there is an increasing sequence $\{n_k\}$ of positive integers such that
\[
\overline{\bigcup_{k=1}^\infty n_k E}\not=[0,1],
 \]
where $\overline{A}$ denotes the closure of $A$ in the Euclidean topology. Then the Fourier transformation of any probability measure on $E$ does not decay. Its proof can be found in \cite{CKL} or \cite{Ly}.

For $k\ge 1$, let $n_k=b_1\cdots b_k$.  Then, by the representation of $\supp \mu$ in \eqref{supp}
\[
n_k(\supp \mu)=\left\{\sum_{j=1}^\infty \frac{d_{k+j}}{b_{k+1}\ldots b_{k+j}}:~ d_{k+j}\in \mathcal{D}_{k+j}, j\ge 1\right\}.
\]
Noting that $b_k\ge 4$ and $r_k\ge 2,$ we have
\[
\begin{split}
\max (n_k(\supp \mu))&=\sum_{j=1}^\infty \frac{ q_{k+j}-1}{b_{k+1}\ldots b_{k+j}}\le \sum_{j=1}^\infty \frac{1}{b_{k+1}\ldots b_{k+j-1}r_{k+j}}\\
&\le \frac{1}{2}\sum_{j=1}^\infty\frac{1}{4^{j-1}}<1.
\end{split}
\]
This implies that $\overline{n_k(\supp \mu)}\not=[0,1]$ and therefore $\fdim \mu=0.$
\end{proof}

\subsection{Entropy dimensions}
Entropy dimension for a probability measure is frequently used in fractals and dynamical systems.
Let $\mathcal{P}_n$ be the $n$-th dyadic partition of $\R^d$:
\[
\mathcal{P}_n=\{I_1\times I_2\times \cdots\times I_d:~I_j\in \mathcal{P}^{(1)}_n, 1\le j\le d\},
\]
where
\[
\mathcal{P}^{(1)}_n =\left\{\left[\frac{k}{2^n},\frac{k+1}{2^n}\right):k\in \Z\right\}.
\]
Write
\[
H_n(\mu)=-\sum_{Q\in \mathcal{P}_n}\mu(Q)\log \mu(Q).
\]

The \textit{upper entropy dimension} of $\mu$ is defined by
\[
\ue \mu=\limsup\limits_{n\to \infty}\frac{H_n(\mu)}{\log 2^n}.
\]
The \textit{lower entropy dimension} $\lowe \mu$ is defined similarly by taking the lower limit.
\begin{prop}\label{ub}
Let $\mu=\mu_{\mathcal{D},\mathcal{B}}$ be the Moran measure defined in \eqref{MM} and \eqref{conn} holds. Then
\begin{equation}\label{su}
\ue \mu=\limsup\limits_{n\to \infty}\frac{\log q_1q_2\cdots q_n}{\log b_1b_2\cdots b_n}.
\end{equation}
\end{prop}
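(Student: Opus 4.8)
\emph{Proposal.} The plan is a ``matched--scale'' comparison between the dyadic partitions $\mathcal{P}_n$ and the natural partition of $\supp\mu$ into level--$k$ Moran cylinders, exploiting the fact that $\mu$ spreads its mass \emph{uniformly} over those cylinders.

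First I would introduce, for $k\ge 1$ and $(d_1,\dots,d_k)\in\mathcal{D}_1\times\cdots\times\mathcal{D}_k$, the half--open interval
\[
I_{d_1\cdots d_k}=\left[\sum_{j=1}^{k}\frac{d_j}{b_1\cdots b_j},\ \sum_{j=1}^{k}\frac{d_j}{b_1\cdots b_j}+\frac{1}{b_1\cdots b_k}\right).
\]
Since $q_n\ge 2$ and $r_n\ge 2$ force $b_n\ge 4$, the ``tail'' $\sum_{j>k}d_j(b_1\cdots b_j)^{-1}$ is at most $\tfrac23(b_1\cdots b_k)^{-1}$; together with the representation \eqref{supp} this shows $\supp\mu\subset\bigcup I_{d_1\cdots d_k}$, that inside $I_{d_1\cdots d_{k-1}}$ the $q_k$ intervals $I_{d_1\cdots d_{k-1}d_k}$ ($d_k=0,\dots,q_k-1$) are pairwise disjoint (they are $(b_1\cdots b_k)^{-1}$--spaced and all fit), and hence, by induction on $k$, that the whole family $\{I_{d_1\cdots d_k}\}$ is pairwise disjoint. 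Thus $\mathcal{F}_k:=\{I_{d_1\cdots d_k}\}\cup\{\R\setminus\bigcup I_{d_1\cdots d_k}\}$ is a finite partition, $\mu(I_{d_1\cdots d_k})=(q_1\cdots q_k)^{-1}$ by the definition of $\mu$ in \eqref{MM}, and therefore, writing $H(\mu,\mathcal{A})=-\sum_{A\in\mathcal{A}}\mu(A)\log\mu(A)$, one gets $H(\mu,\mathcal{F}_k)=\log(q_1\cdots q_k)$ exactly.

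Next, given $n$, I would let $k=k(n)$ be the largest integer with $b_1\cdots b_k\le 2^n$. By \eqref{conn} one has $2^n<b_1\cdots b_{k+1}\le M\,(b_1\cdots b_k)$, so $2^{-n}\le\diam I_{d_1\cdots d_k}<M\,2^{-n}$ and $k(n)\to\infty$. Because the intervals of $\mathcal{F}_k$ are disjoint and of length $\ge 2^{-n}$, each atom of $\mathcal{P}_n$ has positive $\mu$--mass in at most $3$ atoms of $\mathcal{F}_k$; conversely each $I_{d_1\cdots d_k}$, of length $<M\,2^{-n}$, meets at most $M+2$ atoms of $\mathcal{P}_n$. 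The elementary inequality $H(\mu,\mathcal{A})\le H(\mu,\mathcal{A}\vee\mathcal{B})=H(\mu,\mathcal{B})+H(\mu,\mathcal{A}\mid\mathcal{B})$ combined with $H(\mu,\mathcal{A}\mid\mathcal{B})\le\log\big(\sup_{B}\#\{A:\mu(A\cap B)>0\}\big)$, applied in both directions, then gives
\[
\log(q_1\cdots q_k)-\log 3\ \le\ H_n(\mu)\ \le\ \log(q_1\cdots q_k)+\log(M+2),
\]
i.e. $H_n(\mu)=\log(q_1\cdots q_k)+O(1)$ with an absolute constant. Dividing by $\log 2^n$ and noting from the choice of $k$ that $\log(b_1\cdots b_k)\le n\log 2<\log(b_1\cdots b_k)+\log M$, so that $\log 2^n=\log(b_1\cdots b_k)+O(1)$, and that $q_j\ge 2$, $b_j\ge 4$, $0<\log(q_1\cdots q_k)<\log(b_1\cdots b_k)\to\infty$ make the $O(1)$'s negligible, one evaluates along the smallest $n$ with $k(n)=k$ to get ``$\ge$'' in \eqref{su}, and bounds $H_n(\mu)/\log 2^n\le(\log(q_1\cdots q_k)+C)/\log(b_1\cdots b_k)$ for arbitrary $n$ (with $k=k(n)$ running over all large integers) to get ``$\le$''; this is \eqref{su}.

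The main obstacle I anticipate is purely technical: writing down cleanly the geometric separation of the net intervals $I_{d_1\cdots d_k}$ and the two overlap counts (this is exactly where $q_n\ge 2$, $r_n\ge 2$ and $M<\infty$ enter), and carefully checking that the $O(1)$ errors do not affect the $\limsup$. The conceptual content --- $\mu$ is uniform on its cylinders, so the entropy at scale $2^{-n}$ is, up to a bounded error, the logarithm of the number of cylinders of size comparable to $2^{-n}$ --- is standard; alternatively one could deduce \eqref{su} from the local--dimension formula for Moran measures in \cite{LW} via the identity relating $\ue\mu$ to the $\mu$--essential supremum of the upper local dimension.
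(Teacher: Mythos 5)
Your argument is correct, and it takes a genuinely different route from the paper. The paper proves the two inequalities in \eqref{su} by two separate external devices: for the upper bound it invokes $\ue \mu\le \updim\mu$, the identity $\updim\mu=\esssup \overline{d}(\mu,x)$, and the almost-everywhere local dimension formula for Moran measures from \cite{LW}; for the lower bound it uses the integral characterization of $\ue\mu$ from \cite{BGT} (Lemma \ref{eq}) together with the containment $B(x,r_n')\subset\frac32 I$ for net intervals $I$. You instead compute $H_n(\mu)$ directly: the measure is exactly uniform on the level-$k$ cylinders, so $H(\mu,\mathcal{F}_k)=\log(q_1\cdots q_k)$, and the standard conditional-entropy inequality $H(\mu,\mathcal{A})\le H(\mu,\mathcal{B})+\log\bigl(\sup_B\#\{A:\mu(A\cap B)>0\}\bigr)$, applied in both directions at the matched scale $b_1\cdots b_k\asymp 2^n$, gives $H_n(\mu)=\log(q_1\cdots q_k)+O(1)$ with both bounds coming from one comparison. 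The details you flag as delicate do go through: the tail estimate $\sum_{j>k}(q_j-1)(b_1\cdots b_j)^{-1}\le\frac23(b_1\cdots b_k)^{-1}$ (using $r_j\ge2$, $b_j\ge4$) yields the covering and disjointness of the net intervals; a dyadic interval of length $2^{-n}$ meets at most two disjoint intervals of length $\ge 2^{-n}$, and each net interval meets at most $M+2$ dyadic ones; and since $b_j\ge 4$ forces $k(n)$ to increase by at most one at each step, every large $k$ is attained, so the $\limsup$ over $n$ matches the $\limsup$ over $k$ and the $O(1)$ errors are absorbed because $\log(b_1\cdots b_k)\to\infty$. What your approach buys is self-containedness — it avoids both citations and in fact proves the stronger statement $H_n(\mu)=\log(q_1\cdots q_k(n))+O(1)$, so it also computes $\lowe\mu$ — at the price of setting up the partition-comparison machinery; the paper's proof is shorter on the page but leans on two nontrivial imported results.
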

Let's remark that the Hausdorff dimension of the support of $\mu$ is generally less than $\ue \mu$. In fact the set $\supp \mu$ can be regarded as a partial homogeneous Moran set and Lemma 2.2 in \cite{FWW} shows that its Hausdorff dimension is
\[
\hdim \supp \mu=\liminf\limits_{n\to \infty}\frac{\log q_1q_2\cdots q_n}{\log b_1b_2\cdots b_n+\log (b_{n+1}/q_{n+1})},
\]
and then the condition \eqref{conn} implies that
\[
\hdim \supp \mu=\liminf\limits_{n\to \infty}\frac{\log q_1q_2\cdots q_n}{\log b_1b_2\cdots b_n}.
\]
When $q_n=q,b_n=b$ for all $n\geq1$, then Proposition \ref{ub} implies that $\ue \mu=\hdim\supp\mu$.

We will show that there is a spectrum $\Lambda$ of $\mu$ such that $\dim \Lambda= \ue \mu$ in the proof of Theorem \ref{MR1}. On the other hand, it is easy to construct sequences $\{b_n\}_{n=1}^\infty$ and $\{q_n\}_{n=1}^\infty$ such that
\[
\dim \Lambda=\limsup\limits_{n\to \infty}\frac{\log q_1q_2\cdots q_n}{\log b_1b_2\cdots b_n}>\liminf\limits_{n\to \infty}\frac{\log q_1q_2\cdots q_n}{\log b_1b_2\cdots b_n}=\hdim \supp \mu.
\]
This provides counterexamples to the conjecture proposed by He et al. \cite{HKTW}.

To prove Proposition \ref{ub}, we need some notation. For a Borel probability measure $\mu$ in $\R^d$, its upper packing dimension is defined by
\[
\updim \mu=\inf\{\pdim F:~ \mu(F)=1\},
\]
where $\pdim F$ is the packing dimension of $F$. For its definition and properties, the reader is referred to the famous books \cite{Fal1,Fal}. The upper packing dimension of a measure is closely related to its upper local dimension, which is defined by
 \[
 \overline{d}(\mu,x)=\limsup\limits_{r\to 0}\frac{\log \mu(B(x,r))}{\log r}, ~~x\in \R^d.
 \]
 It is well known that $\updim \mu$ is equal to the essential supremum of $\overline{d}(\mu,x)$. That is,
 \begin{equation}\label{upd}
 \updim \mu=\esssup \overline{d}(\mu,x),
 \end{equation}
see, for example, \cite{Edg} or \cite{Fal}.

There is an equivalent definition for upper entropy dimension.
\begin{lemma}[Theorem 2.1 in \cite{BGT}]\label{eq}
Let $\mu$ be a Borel probability measure on $\R^d.$ Then
\[
\ue \mu=\limsup\limits_{r\to 0}\frac{\int_{\supp \mu}\log \mu(B(x,r))d\mu(x)}{\log r}.
\]
\end{lemma}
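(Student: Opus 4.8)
The plan is to compare the dyadic-cube entropy $H_n(\mu)$ with the ball integral by sandwiching each ball $B(x,r)$ between dyadic cubes of comparable generation. First I would put the left-hand side into integral form: since every $x$ lies in a unique cube $Q_n(x)\in\mathcal P_n$ of side $2^{-n}$,
\[
H_n(\mu)=-\sum_{Q\in\mathcal P_n}\mu(Q)\log\mu(Q)=-\int\log\mu(Q_n(x))\,d\mu(x).
\]
Writing $A_n:=\int\log\mu(Q_n(x))\,d\mu(x)=-H_n(\mu)$ and $\beta(r):=\int_{\supp\mu}\log\mu(B(x,r))\,d\mu(x)$, the assertion becomes
\[
\limsup_{n\to\infty}\frac{A_n}{\log 2^{-n}}=\limsup_{r\to0}\frac{\beta(r)}{\log r}.
\]
For compactly supported $\mu$ only finitely many cubes of each generation meet $\supp\mu$, so $H_n(\mu)<\infty$, and the inclusions below show $\beta(r)$ is finite too; this is the case of interest, and the general case follows after truncating to a large cube.

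Next I would record two elementary inclusions relating balls and dyadic cubes. Since $\diam Q_n(x)=\sqrt d\,2^{-n}$ and $x\in Q_n(x)$, choosing $m=m(r)$ to be the least integer with $\sqrt d\,2^{-m}<r$ forces $Q_m(x)\subseteq B(x,r)$, so $\log\mu(B(x,r))\ge\log\mu(Q_m(x))$ and, integrating, $\beta(r)\ge A_{m}$. In the other direction, for $k=k(r):=\lfloor\log_2(1/r)\rfloor$ one has $r\le 2^{-k}$, and then $B(x,r)$ lies in the block $\widehat{Q_k(x)}$ formed by $Q_k(x)$ together with its at most $3^d-1$ neighbours in $\mathcal P_k$; hence $\log\mu(B(x,r))\le\log\mu(\widehat{Q_k(x)})$ and
\[
\beta(r)\le\int\log\mu(\widehat{Q_k(x)})\,d\mu(x)=\sum_{Q\in\mathcal P_k}\mu(Q)\log\mu(\widehat Q).
\]
Here $m=k+O(1)$ and $k\to\infty$ as $r\to0$.

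The key step, which I expect to be the main obstacle, is to show that replacing a cube by its $3^d$-block costs only a fixed additive constant, uniformly in the generation:
\[
\sum_{Q\in\mathcal P_k}\mu(Q)\log\mu(\widehat Q)\le\sum_{Q\in\mathcal P_k}\mu(Q)\log\mu(Q)+(3^d-1).
\]
I would prove this with the tangent-line bound $\log y\le\log x+\frac{y-x}{x}$ (concavity of $\log$), applied with $x=\mu(Q)>0$ and $y=\mu(\widehat Q)\ge\mu(Q)$, which gives $\mu(Q)\log\mu(\widehat Q)\le\mu(Q)\log\mu(Q)+\sum_{Q'\sim Q,\,Q'\ne Q}\mu(Q')$. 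Summing over $Q$ and exchanging the order of summation, each cube $Q'$ is counted at most $3^d-1$ times, so the total error is at most $(3^d-1)\sum_{Q'}\mu(Q')=3^d-1$. Combining with the previous paragraph yields the sandwich $A_{m}\le\beta(r)\le A_{k}+(3^d-1)$.

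Finally I would divide by $\log r<0$ and pass to the limit; note $-\log r\in[k\log2,(k+1)\log2)$ and that $H_n(\mu)/(n\log2)\le d+o(1)$ is bounded (for compactly supported $\mu$). For the inequality ``$\ge$'', dividing $\beta(r)\le A_k+(3^d-1)$ by the negative number $\log r$ reverses it, and evaluating along $r=2^{-n}$ (so $k=n$) gives $\beta(2^{-n})/\log 2^{-n}\ge H_n(\mu)/(n\log2)-(3^d-1)/(n\log2)$; taking $\limsup$ over $n$ kills the last term and produces $\limsup_{r\to0}\beta(r)/\log r\ge\ue\mu$. For ``$\le$'', dividing $A_{m}\le\beta(r)$ by $\log r$ gives $\beta(r)/\log r\le H_m(\mu)/(-\log r)\le (m/k)\,H_m(\mu)/(m\log2)$; since $m/k\to1$ and the quotient $H_m(\mu)/(m\log2)$ is bounded, taking $\limsup$ over $r\to0$ yields $\limsup_{r\to0}\beta(r)/\log r\le\limsup_m H_m(\mu)/(m\log2)=\ue\mu$. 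The two inequalities give the claimed equality.
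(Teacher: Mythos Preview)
The paper does not prove this lemma at all: it is quoted verbatim as Theorem~2.1 of \cite{BGT} and used as a black box in the proof of Proposition~\ref{ub}. So there is no ``paper's own proof'' to compare against.

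That said, your argument is correct and is essentially the standard proof (and, as far as I know, the one in \cite{BGT}): rewrite $H_n(\mu)$ as $-\int\log\mu(Q_n(x))\,d\mu(x)$, sandwich $B(x,r)$ between a dyadic cube $Q_m(x)$ contained in it and the $3^d$-block $\widehat{Q_k(x)}$ containing it with $m=k+O(1)$, and then control the passage from $\mu(Q)$ to $\mu(\widehat Q)$ by the concavity inequality $\log y\le\log x+(y-x)/x$, which after summation and the neighbour-counting argument gives the uniform additive error $3^d-1$. The final step---dividing by $\log r$ and using $m/k\to1$ together with the boundedness of $H_m(\mu)/(m\log 2)$ in the compactly supported case---is handled correctly. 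The one place you are a bit casual is the reduction of the general Borel case to the compactly supported one; ``truncating to a large cube'' changes the measure and one must check that both sides behave well under this truncation, but this is routine.
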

Now we turn to the proof of Proposition \ref{ub}.

\begin{proof}[Proof of Proposition \ref{ub}]
It is well known  that $\ue \mu\le \updim \mu$, see, for example,  Theorem 1.3 in \cite{FLR}. On the other hand, the formula provided in \cite{LW} shows that
\[
 \overline{d}(\mu,x)=\limsup\limits_{n\to \infty}\frac{\log q_1q_2\cdots q_n}{\log b_1b_2\cdots b_n}
\]
for $\mu$ almost every $x\in\supp \mu.$ In fact, the upper local dimension formula for a more general class of measures is presented in \cite{LW}. Therefore, by \eqref{upd} we get the upper bound in $\eqref{su}.$

Next we use Lemma \ref{eq} to get the lower bound.  For each $n\ge 1,$ define
%\[
%\mathcal{A}_n=\{I_1\times I_2\times \cdots\times I_d:~I_j\in \mathcal{A}^{(1)}_n, 1\le j\le d\},
%\]
%where
\[
\mathcal{A}_n =\left\{\left[\frac{k}{b_1\cdots b_n},\frac{k+1}{b_1\cdots b_n}\right):k\in \Z\right\}.
\]

Clearly, each $\mathcal{A}_n$ is a partition of $\R.$
For any $x\in \supp \mu$ and $r_n'=\frac{1}{b_1\cdots b_n}$, we have
\begin{equation}\label{contain}
B(x, r_n')\subset \frac{3}{2} I,
\end{equation}
where $x\in I\in \mathcal{A}_n$ and $\frac{3}{2} I=B(x_I,\frac{3}{2}|I|)$. Here, $x_I$ is the centre of $I$ and $|I|$ denotes the Lebesuge measure of $I$. The conclusion \eqref{contain} implies that
\[
\begin{split}
\int_{\supp \mu}\log\mu(B(x,r_n'))d\mu(x)&\le \sum_{I\in \mathcal{A}_n}\mu(I)\log (3\mu(I))\\
                                        &=\log 3 -\log q_1\cdots q_n.
\end{split}
\]
Therefore, it follows from Lemma \ref{eq} that
\[
\begin{split}
\ue \mu&\ge \limsup\limits_{n\to \infty}\frac{\int_{\supp \mu}\log\mu(B(x,r_n'))d\mu(x)}{\log r_n'}\\
       &\ge \limsup\limits_{n\to \infty}\frac{\log q_1q_2\cdots q_n}{\log b_1b_2\cdots b_n}.
\end{split}
\]
This proves the lower bound in $\eqref{su}$ and  therefore the proof of Proposition \ref{ub} is completed.
\end{proof}

\subsection{Beurling dimension}
Czaja, Kutyniok and Speegle in \cite{CKS} introduced the notion of Beurling dimension for discrete subsets of $\R^d$ via Beurling density and employed it to study Gabor pseudoframes. From then on, Beurling dimension has wide applications in the study of (frame) spectra for (frame) spectral measures, see \cite{DHL,DHSW,HKTW,Shi} and references therein. In this paper, we only consider the discrete set in $\R.$
Let $\Lambda\subset \R$ be a countable set. For $r>0,$ the \textit{upper $r$-Beurling density} of $\Lambda$ is defined by
\[
D_r^+(\Lambda)=\limsup\limits_{h\to \infty}\sup_{x\in \R}\frac{\#(\Lambda \cap B(x,h))}{h^r}.
\]
It is not difficult to prove that there is a critical value of $r$ where $D_r^+(\Lambda)$ jumps from $\infty$ to $0$. This critical value is defined as the \textit{Beurling dimension} of $\Lambda.$ More precisely
\[
\dim \Lambda=\inf\{r:D_r^+(\Lambda)=0\}=\sup\{r:D_r^+(\Lambda)=\infty\}.
\]

We remark that the Beurling dimension here is called upper Beurling dimension in \cite{CKS} and we can similarly define the lower Beurling dimension, which is not needed in this paper.
The following proposition, established in \cite{CKS}, provides a way for calculating the Beurling dimension of a countable set.
\begin{prop}[\cite{CKS}]
\label{ec}
Let $\Lambda\subset \R$ be a countable set. Then
\begin{equation}\label{jsf}
\dim \Lambda=\limsup\limits_{h\to \infty}\sup_{x\in \R^d}\frac{\log \#(\Lambda \cap B(x,h))}{\log h}.
\end{equation}
\end{prop}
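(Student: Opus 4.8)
The plan is to exploit the elementary relationship between the $r$-Beurling density and the counting function, reducing the statement to a standard critical-exponent computation. Write $N(h):=\sup_{x\in\R}\#(\Lambda\cap B(x,h))$ for the maximal number of points of $\Lambda$ in a ball of radius $h$. Since the normalizing factor $h^r$ is independent of $x$, the definition of the upper $r$-Beurling density rearranges as
\[
D_r^+(\Lambda)=\limsup_{h\to\infty}\frac{N(h)}{h^r}.
\]
Denote by $\beta$ the right-hand side of \eqref{jsf}, that is, $\beta=\limsup_{h\to\infty}\frac{\log N(h)}{\log h}$; for $\Lambda\neq\emptyset$ we have $N(h)\ge 1$ for every $h>0$ (any fixed point of $\Lambda$ lies in $B(\lambda_0,h)$), so $\log N(h)\ge 0$ is well defined, and the empty case is trivial. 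The goal is to show $\dim\Lambda=\beta$, which I will do by establishing $\inf\{r:D_r^+(\Lambda)=0\}\le\beta$ and $\sup\{r:D_r^+(\Lambda)=\infty\}\ge\beta$ separately and then closing the loop with a monotonicity remark.

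First I would show that $r>\beta$ forces $D_r^+(\Lambda)=0$. Fix $\varepsilon>0$ with $\beta+\varepsilon<r$. By the definition of the upper limit there is $H>1$ such that $\log N(h)<(\beta+\varepsilon)\log h$, i.e. $N(h)<h^{\beta+\varepsilon}$, for all $h\ge H$. Consequently $N(h)/h^r<h^{\beta+\varepsilon-r}\to 0$ as $h\to\infty$, so $D_r^+(\Lambda)=0$. This yields $\inf\{r:D_r^+(\Lambda)=0\}\le\beta$.

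Next I would prove the complementary bound: $r<\beta$ forces $D_r^+(\Lambda)=\infty$. Fix $\varepsilon>0$ with $r<\beta-\varepsilon$. By the definition of the upper limit there is a sequence $h_k\to\infty$ with $\log N(h_k)>(\beta-\varepsilon)\log h_k$, i.e. $N(h_k)>h_k^{\beta-\varepsilon}$. Hence $N(h_k)/h_k^r>h_k^{\beta-\varepsilon-r}\to\infty$ along this sequence, so $D_r^+(\Lambda)=\infty$. This yields $\sup\{r:D_r^+(\Lambda)=\infty\}\ge\beta$.

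Finally I would assemble the pieces. Because $h^r$ is increasing in $r$ for $h>1$, the density $D_r^+(\Lambda)$ is nonincreasing in $r$, which gives the general inequality $\sup\{r:D_r^+(\Lambda)=\infty\}\le\inf\{r:D_r^+(\Lambda)=0\}$. Combining this with the two estimates above gives
\[
\beta\le\sup\{r:D_r^+(\Lambda)=\infty\}\le\inf\{r:D_r^+(\Lambda)=0\}\le\beta,
\]
so all three quantities coincide and equal $\beta$, which is precisely \eqref{jsf}; this also reproves that the Beurling dimension is well defined, since the two expressions in its definition are shown to agree. I do not expect a genuine obstacle: the argument is a routine critical-exponent squeeze, and the only points needing care are restricting to $h>1$ so that $\log h>0$ throughout, and the degenerate cases ($\Lambda$ empty, or $\Lambda$ possessing a bounded accumulation point, in which case $N(h)=\infty$ for large $h$ and both $\beta$ and $\dim\Lambda$ are $\infty$), which the same inequalities handle.
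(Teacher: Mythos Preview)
Your argument is correct: it is the standard critical-exponent squeeze relating the growth exponent of the counting function $N(h)$ to the critical value of $r$ at which $D_r^+(\Lambda)$ drops from $\infty$ to $0$. Note, however, that the paper does not supply its own proof of this proposition; it is simply quoted from \cite{CKS}, so there is no ``paper proof'' to compare against. Your write-up would serve as a perfectly adequate self-contained justification were one desired.
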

Clearly,  formula \eqref{jsf} is equivalent to the following
\begin{equation}\label{jsf2}
\dim \Lambda=\limsup\limits_{|I|\to \infty}\frac{\log \#(\Lambda \cap I)}{\log |I|},
\end{equation}
where $I$ is taken over all intervals in $\R$ with length $|I|\to \infty.$

\begin{lemma}\cite{CKS}\label{inc}
	Let $\Lambda_1,\Lambda_2\subset \R$ be two countable sets. Then the following statements hold:
\begin{enumerate}
\item [\textup{(i)}] Monotonicity: If $\Lambda_1\subset \Lambda_2$, then
		\[\dim\Lambda_1\leq \dim\Lambda_2;\]
\item [\textup{(ii)}] Finite stability:
		\[\dim(\Lambda_1\cup \Lambda_2)=\max(\dim\Lambda_1,\dim\Lambda_2).\]
\end{enumerate}
	
\end{lemma}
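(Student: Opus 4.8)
The plan is to work directly from the combinatorial characterization of the Beurling dimension recorded in \eqref{jsf2}, which expresses $\dim \Lambda$ as the upper limit, over intervals $I$ with $|I|\to\infty$, of $\log \#(\Lambda\cap I)/\log|I|$. Both assertions then reduce to elementary monotonicity and subadditivity of the counting function $I\mapsto \#(\Lambda\cap I)$, combined with the standard interchange of suprema and upper limits with finite maxima. Throughout I would restrict attention to intervals with $|I|>1$, so that the denominator $\log|I|$ is positive, and treat intervals with $\#(\Lambda\cap I)=0$ (where the ratio is $-\infty$) as harmless, since they never contribute to an upper limit.

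For the monotonicity statement (i), I would observe that $\Lambda_1\subset\Lambda_2$ gives $\Lambda_1\cap I\subset\Lambda_2\cap I$ for every interval $I$, whence $\#(\Lambda_1\cap I)\le \#(\Lambda_2\cap I)$. Taking logarithms and dividing by the positive quantity $\log|I|$ preserves the inequality termwise, and passing to the upper limit as $|I|\to\infty$ in \eqref{jsf2} immediately yields $\dim\Lambda_1\le\dim\Lambda_2$.

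For the finite stability statement (ii), the inequality $\dim(\Lambda_1\cup\Lambda_2)\ge\max(\dim\Lambda_1,\dim\Lambda_2)$ is free from part (i), since $\Lambda_i\subset\Lambda_1\cup\Lambda_2$ for $i=1,2$. The substance is the reverse inequality, for which I would use the crude bound
\[
\#\big((\Lambda_1\cup\Lambda_2)\cap I\big)\le \#(\Lambda_1\cap I)+\#(\Lambda_2\cap I)\le 2\max\big(\#(\Lambda_1\cap I),\,\#(\Lambda_2\cap I)\big),
\]
so that after taking logarithms and dividing by $\log|I|$,
\[
\frac{\log\#((\Lambda_1\cup\Lambda_2)\cap I)}{\log|I|}\le \frac{\log 2}{\log|I|}+\max_{i=1,2}\frac{\log\#(\Lambda_i\cap I)}{\log|I|}.
\]
Taking the supremum over all intervals of a fixed length and then the upper limit as the length tends to infinity, the error term $\log 2/\log|I|$ vanishes, and what remains is to move these operations through the finite maximum.

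This last interchange is the only point that requires care, and is the closest thing to an obstacle in an otherwise routine argument: I would invoke the elementary identities that a supremum over a common index set commutes with a finite maximum, and that $\limsup_L\max(a_L,b_L)=\max(\limsup_L a_L,\limsup_L b_L)$. Applying both to the displayed inequality produces $\dim(\Lambda_1\cup\Lambda_2)\le\max(\dim\Lambda_1,\dim\Lambda_2)$, which together with the reverse inequality from part (i) completes the proof. An induction on the number of sets would then extend the stability to any finite union, though only the two-set case is used in the sequel.
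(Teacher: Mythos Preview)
Your proof is correct. The paper itself does not supply a proof of this lemma: it is stated with a citation to \cite{CKS} and used as a black box. Your direct argument from the formula \eqref{jsf2}, via monotonicity and subadditivity of the counting function together with the elementary identity $\limsup\max(a,b)=\max(\limsup a,\limsup b)$, is the standard route and is entirely self-contained; there is nothing to compare against in the paper beyond the bare citation.
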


\begin{defn}
	A countable set $\Lambda=\{a_n\}_{n=0}^{\infty}\subset \R
	$ is called $b$-lacunary, if $a_0 = 0$,
	$|a_1|\geq b$ and for all $n \geq 1$,
	$|a_{n+1}| \geq b|a_n|$.
\end{defn}
\begin{lemma}\cite{AL}\label{lac}
	Let $b>1$. If $\Lambda$ is a $b$-lacunary set, then $\dim \Lambda=0$.
\end{lemma}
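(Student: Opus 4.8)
The plan is to work directly from the characterization \eqref{jsf2} of the Beurling dimension and to show that every interval $I\subset\R$ can contain at most $O(\log|I|)$ points of a $b$-lacunary set $\Lambda$. Since $\log(\log|I|)/\log|I|\to 0$ as $|I|\to\infty$, feeding such a bound into \eqref{jsf2} forces $\dim\Lambda=0$. The whole argument thus reduces to a counting estimate: \emph{geometric spacing of the points leaves room for only logarithmically many of them in an interval of fixed length}.

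First I would record the elementary consequence of the defining conditions. Writing $\Lambda=\{a_n\}_{n=0}^{\infty}$ with $a_0=0$, $|a_1|\geq b$ and $|a_{n+1}|\geq b|a_n|$, a one-line induction gives $|a_n|\geq b^{\,n}$ for every $n\geq 1$. In particular every nonzero element of $\Lambda$ has modulus at least $b$, and the moduli $|a_n|$ are strictly increasing in $n$; this monotonicity is exactly what lets me track indices by size.

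Next, fix an interval $I$ with $|I|=L$ and estimate $\#(\Lambda\cap I)$. I would split $\Lambda\cap I$ by sign, counting the point $0$ separately and treating the negative points symmetrically to the positive ones. List the positive points of $\Lambda\cap I$ in increasing order as $y_1<y_2<\cdots<y_k$. Since the moduli increase with the index and the value of a positive point equals its modulus, larger $y_j$ corresponds to a larger index; hence consecutive $y_j$ have strictly increasing indices, and lacunarity gives $y_{j+1}\geq b\,y_j$, so that $y_k\geq b^{\,k-1}y_1$. Combining $y_k\leq y_1+L$ with $y_1\geq b$ yields $(b^{\,k-1}-1)\,b\leq L$, whence
\[
k\;\leq\;1+\log_b\!\Bigl(1+\tfrac{L}{b}\Bigr)\;=\;O(\log L).
\]
The same bound controls the number of negative points, so $\#(\Lambda\cap I)\leq C\log L$ for an absolute constant $C$ and all large $L$.

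Finally, substituting this into \eqref{jsf2} gives
\[
\dim\Lambda=\limsup_{|I|\to\infty}\frac{\log\#(\Lambda\cap I)}{\log|I|}\;\leq\;\limsup_{L\to\infty}\frac{\log\bigl(C\log L\bigr)}{\log L}\;=\;0,
\]
which is the assertion. There is no deep obstacle here; the only point demanding care is the bookkeeping with signs, namely observing that the positive (resp.\ negative) points lying in $I$ inherit the factor-$b$ growth of the full sequence, so that an interval of length $L$ can host only $O(\log L)$ of each, and hence only $O(\log L)$ points of $\Lambda$ in total.
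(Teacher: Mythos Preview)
Your argument is correct. The paper does not supply a proof of this lemma; it is quoted from \cite{AL} without argument, so there is no paper proof to compare against. Your counting via \eqref{jsf2}---sorting the positive (resp.\ negative) points of $\Lambda$ in an interval, observing that their indices must increase with their values because $|a_n|$ is strictly increasing, and then iterating the lacunarity inequality to get $y_{j+1}\ge b\,y_j$---is exactly the natural route and yields the $O(\log L)$ bound cleanly. The only step worth making one notch more explicit is that when consecutive positive points $y_j=a_{n_j}$ and $y_{j+1}=a_{n_{j+1}}$ have $n_{j+1}>n_j+1$, one iterates $|a_{n+1}|\ge b|a_n|$ to obtain $|a_{n_{j+1}}|\ge b^{\,n_{j+1}-n_j}|a_{n_j}|\ge b|a_{n_j}|$; you implicitly use this, and it is immediate, but stating it removes any doubt.
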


Generally speaking, it is difficult to calculate the Beurling dimension of a countable set unless it has particular structure.  We next provide a formula of Beurling dimension for a class of special discrete sets in $\R$, which plays a key role in the proof of Theorem \ref{MR1}. Let $\{t_k\}$ be a sequence of positive integers. Let $\{n_k\}_{k=0}^\infty, \{m_k\}_{k=1}^\infty$ be two sequences of positive integers satisfying
$n_0=1$ and
\begin{equation}\label{cfd}
\text {$t_{k+1}n_1\cdots n_k>\sum_{i=1}^{k}(m_i-1)t_in_1\cdots n_{i-1}$ \quad for any $k\ge 1.$ }
\end{equation}
For example, if $t_k=n_k/m_k$ for $k\geq1$, \eqref{cfd} holds. In fact, for any $k\geq1$, \[
\begin{split}
&t_{k+1}n_1\cdots n_k-\sum_{i=1}^{k}(m_i-1)t_in_1\cdots n_{i-1}\\
\geq& t_{k+1}n_1\cdots n_k+t_1+t_2n_1+\cdots+t_kn_1\cdots n_{k-1}\\
& -n_1-n_1n_2-\cdots-n_1\cdots n_k\\
>&0
\end{split}
\]
by the fact $t_k=n_k/m_k\geq2$.

For any $k\ge 1,$ write
\[
\mathcal{M}_k=\left\{\sum_{i=1}^k x_it_i(n_1\cdots n_{i-1}):~\text{$x_i\in \{0,1,\ldots, m_i-1\}$ for $1\le i\le k$}\right\}
\]
and define
\begin{equation}\label{IMS}
\mathcal{M}:=\mathcal{M}(\{n_k\},\{m_k\}, \{t_k\})=\bigcup_{k=1}^\infty \mathcal{M}_k.
\end{equation}
We call $\mathcal{M}$ an \textit{integer Moran set} associated with the data $\{n_k\}$, $\{m_k\}$ and $\{t_k\}$. The definition of integer Moran set is motivated by the structures of some spectra of known spectral measures.

%We remark that condition \eqref{cfd} guarantees that the elements in the sets $\mathcal{M}_k$ have the following unique expansion, which will make our estimates convenient.
%
% \begin{lemma}\label{expansion}
% For any $n\geq1$, there exists a unique $x=x_1\ldots x_k$ with all $x_i\in\{0,1,\ldots,m_i-1\}$ and $x_k\neq0$ such that
% \begin{equation}\label{exng}
%n=x_1+x_2n_1+\cdots+x_k n_1\cdots n_{k-1}=\sum_{j=1}^k x_jn_1\cdots n_{j-1}.
%\end{equation}
% \end{lemma}
% \begin{proof}
%For $n\ge 1$, there exist integers $t_1\ge 0$ and $x_1\in \{0,1,\ldots,m_1-1\}$ such that $n=n_1t_1+x_1$. Similarly, we can write $t_1=n_2t_2+x_2$, where $0\le t_2\in \N$ and $x_2\in \{0,1,\ldots,m_2-1\}$ and therefore $n=t_2n_1n_2+x_2n_1+x_1$. Continuing the procedure, there will exist some integer $k$ such that  $n=x_1+x_2n_1+\cdots+t_kn_1\cdots n_{k-1}, t_k\not=0$ and $t_k<n_{k+1}$. We stop at step $k$ and obtain \eqref{exng} by letting $t_k=x_k$.
% \end{proof}

  Since integer Moran set has traceable construction, its Beurling dimension can be calculated. More precisely, we have the following result.
\begin{prop}\label{Bdims}
Let $\mathcal{M}$ be the integer Moran set defined as in \eqref{IMS} and the associated date $\{m_k\}, \{n_k\}$ and $\{t_k\}$ satisfy the assumption \eqref{cfd}. Then, its Beurling dimension is given by the following formula
\begin{equation}\label{Bf}
\dim \mathcal{M}=\limsup\limits_{k\to \infty}\frac{\log m_1\cdots m_k}{\log m_kt_k n_1\cdots n_{k-1}}.
\end{equation}
\end{prop}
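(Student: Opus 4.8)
The plan is to exploit the mixed-radix positional structure of $\mathcal{M}$ forced by \eqref{cfd}. Write $T_k:=t_k n_1\cdots n_{k-1}$, so that passing from the $(k-1)$-st to the $k$-th generation of $\mathcal{M}$ amounts to inserting a digit $x_k\in\{0,\dots,m_k-1\}$ of weight $T_k$, and write $L_k:=m_kT_k=m_k t_k n_1\cdots n_{k-1}$, so that the right-hand side of \eqref{Bf} is $\limsup_k \log(m_1\cdots m_k)/\log L_k$. We may assume $m_k\ge 2$ for all $k$, since a digit with $m_k=1$ contributes nothing. First I would record the elementary consequences of \eqref{cfd}: (a) the expansion $\sum_i x_i t_i n_1\cdots n_{i-1}$ of an element of $\mathcal{M}$ is unique (compare two expansions at the largest index $j$ where they differ; the lower-order tail is dominated by $T_j$), so $\#\mathcal{M}_k=m_1\cdots m_k$; (b) $\max\mathcal{M}_{k-1}=\sum_{i=1}^{k-1}(m_i-1)T_i<T_k$, while $\frac{1}{2}L_{k-1}\le\max\mathcal{M}_{k-1}<L_{k-1}$ and $m_1\cdots m_k\le L_k$, and $T_k$ (hence $L_k$) is strictly increasing, so $L_k\to\infty$; (c) for each $k$, writing $\mathcal{N}_k:=\sum_{i\ge k}\{0,T_i,\dots,(m_i-1)T_i\}$ (finite restricted sums), every $\omega\in\mathcal{M}$ decomposes uniquely as $\omega=a+b$ with $a\in\mathcal{M}_{k-1}\subset[0,T_k)$ and $b\in\mathcal{N}_k$, and $\mathcal{N}_k=\{0,T_k,\dots,(m_k-1)T_k\}+\mathcal{N}_{k+1}$ is a disjoint union of ``combs'' (arithmetic progressions of $m_k$ terms with common difference $T_k$) indexed by $\mathcal{N}_{k+1}$, any two points of which are more than $\max\mathcal{M}_k$ apart.

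The lower bound is immediate: the interval $I_k:=[0,L_k)$ contains all of $\mathcal{M}_k$ because $\max\mathcal{M}_k<L_k$, so $\#(\mathcal{M}\cap I_k)\ge m_1\cdots m_k$, and the density formula \eqref{jsf2} gives $\dim\mathcal{M}\ge\limsup_k\log(m_1\cdots m_k)/\log L_k$.

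For the upper bound, fix an interval $I$ with $|I|$ large and let $k$ be the unique index with $\max\mathcal{M}_{k-1}\le|I|<\max\mathcal{M}_k$ (this forces $m_k\ge2$, and $T_k$ lies in that range). By (c), $\#(\mathcal{M}\cap I)\le (m_1\cdots m_{k-1})\cdot\#(\mathcal{N}_k\cap J)$, where $J\supseteq I-\mathcal{M}_{k-1}$ is an interval with $|J|\le|I|+\max\mathcal{M}_{k-1}<2\max\mathcal{M}_k$. Counting the combs that meet $J$ (at most $|J|/\max\mathcal{M}_k+2<4$) and the teeth of each such comb lying in $J$ (at most $|J|/T_k+1$), we get $\#(\mathcal{N}_k\cap J)\le 4(|J|/T_k+1)$. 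Now split into two cases. If $|I|<T_k$, then $|J|<2T_k$, so $\#(\mathcal{M}\cap I)\le 12\,m_1\cdots m_{k-1}$, while $|I|\ge\max\mathcal{M}_{k-1}\ge\frac{1}{2}L_{k-1}$; hence $\log\#(\mathcal{M}\cap I)/\log|I|$ is, up to a vanishing error, at most $\log(m_1\cdots m_{k-1})/\log L_{k-1}$. If instead $T_k\le|I|<\max\mathcal{M}_k\,(<L_k)$, then $\#(\mathcal{N}_k\cap J)\le 12\,|I|/T_k$, so, using $T_k=L_k/m_k$,
\[
\#(\mathcal{M}\cap I)\le 12\,(m_1\cdots m_k)\,\frac{|I|}{L_k};
\]
since $m_1\cdots m_k\le L_k$ and $|I|<L_k$, a one-line manipulation with $P=\log(m_1\cdots m_k)$, $Q=\log L_k$, $R=\log|I|$ (where $0\le P\le Q$, $0<R<Q$) gives $P/R+1-Q/R\le P/Q$, whence $\log\#(\mathcal{M}\cap I)/\log|I|\le\log(m_1\cdots m_k)/\log L_k+\log 12/\log|I|$. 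Letting $|I|\to\infty$, so $k\to\infty$ and the error terms disappear (the relevant quotients lie in $[0,1]$ and $\log L_k\to\infty$), both cases yield $\dim\mathcal{M}\le\limsup_k\log(m_1\cdots m_k)/\log L_k$, which together with the lower bound proves \eqref{Bf}.

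The crux is the second case of the upper bound: one must verify that intersecting $\mathcal{M}$ with an interval strictly shorter than the span $L_k$ of its $k$-th generation produces no more than roughly $(m_1\cdots m_k)\,|I|/L_k$ points, i.e.\ that no ``extra'' points leak in from generations beyond $k$ — which is exactly what the separation property (consecutive points of $\mathcal{N}_{k+1}$ are more than $\max\mathcal{M}_k$ apart, a restatement of \eqref{cfd}) secures — and then that the resulting estimate sharpens precisely to the $k$-th term of the formula rather than to the trivial bound $1$. The remaining ingredients — uniqueness of expansions, $L_k\to\infty$, and the $\limsup$ manipulations — are routine.
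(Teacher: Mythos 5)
Your argument is correct, and the upper bound is obtained by a genuinely different route from the paper's. The paper reduces to intervals $I=[a,b]$ with both endpoints in $\mathcal{M}$, reads off the mixed-radix digits of $a$ and $b$, and case-analyzes on the top digit difference $x_k-x_k'$; optimizing the resulting ratio requires a monotonicity lemma for $f(s)=\log(s a_{k-1})/\log((s-1)c_{k-1})$ and quietly invokes the extra quantitative hypotheses $m_k\ge 2$, $t_k\ge 2$, $n_k\ge 2m_k$ coming from the intended application, plus a separate subcase analysis when $x_k=x_k'+1$. You instead use the additive splitting $\mathcal{M}=\mathcal{M}_{k-1}+\mathcal{N}_k$ together with the separation of $\mathcal{N}_{k+1}$ by more than $\max\mathcal{M}_k$ (a direct restatement of \eqref{cfd}), and then a crude packing count whose absolute constants ($4$, $12$) wash out in the $\limsup$ of \eqref{jsf2}; the optimization over the digit difference is replaced by the elementary inequality $P/R+1-Q/R\le P/Q$. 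This buys a shorter and more robust proof that uses only \eqref{cfd}, at the modest cost of two points you should spell out: (i) the bound $m_1\cdots m_k\le L_k$ (needed for $P\le Q$) should be justified by noting that $\mathcal{M}_{k-1}$ consists of $m_1\cdots m_{k-1}$ distinct integers in $[0,T_k)$, whence $m_1\cdots m_{k-1}\le T_k$; and (ii) the reduction to $m_k\ge 2$ deserves a sentence, since deleting a trivial digit leaves $\mathcal{M}$ unchanged but removes terms from the $\limsup$ in \eqref{Bf} --- one checks via \eqref{cfd} that each removed term differs from a retained one by $O(1/\log T_k)\to 0$, so the value of the right-hand side is unaffected. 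The lower bounds in the two proofs are essentially identical.
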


\begin{proof}
First, if there exists $K\ge 1$ such that $m_k=1$ for all  $k\ge K$, then the corresponding set $\mathcal{M}$ is a finite set and therefore $\dim \mathcal{M}=0.$ That is, \eqref{Bf} holds. Hence, we next assume that there are finitely many $k$ such that $m_k\ge 2.$

For positive integer $k$ with $m_k\ge 2$, let $J_k=[0, \sum_{i=1}^k (m_i-1)t_i(n_1\cdots n_{i-1})]$. Then $\#(\mathcal{M}\cap J_k)=\#\mathcal{M}_k=m_1\cdots m_k$ and we can use $\eqref{cfd}$ to estimate the length of $J_k$:
\[
\begin{split}
|J_k|&=m_kt_k(n_1\cdots n_{k-1})-\left(t_kn_1\cdots n_{k-1}-\sum_{i=1}^{k-1}(m_i-1)t_in_1\cdots n_{i-1}\right)\\
   &\le m_kt_k n_1\cdots n_{k-1}.
\end{split}
\]
Hence, by Proposition \ref{ec} we have
\[
\begin{split}
\dim \mathcal{M}&=\limsup\limits_{|I|\to \infty}\frac{\log \#(\mathcal{M} \cap I)}{\log |I|}\quad\quad (\text{by $\eqref{jsf2}$})\\
            &\ge \limsup\limits_{k\to \infty}\frac{\log \#(\mathcal{M} \cap J_k)}{\log |J_k|}\\
            &\geq\limsup\limits_{k\to \infty}\frac{\log m_1\cdots m_k}{\log m_k t_kn_1\cdots n_{k-1}}.
\end{split}
\]

We next prove that
\begin{equation}\label{upperb}
\dim \mathcal{M}\le \limsup\limits_{k\to \infty}\frac{\log m_1\cdots m_k}{\log m_kt_k n_1\cdots n_{k-1}}.
\end{equation}

 Assume that $I=[a,b]$ is an interval with $a, b\in \N, a<b$.  We can assume that $a,b\in \mathcal{M}$. In fact, if not, we can slightly shorten the interval $I$ to $I'$ such that the endpoints of $I'$ are in $\mathcal{M}$ and
 $\#(\mathcal{M}\cap I')= \#(\mathcal{M}\cap I).$
 To prove \eqref{upperb}, it is sufficient to consider the interval $I'$ since
 \[
  \limsup\limits_{|I|\to \infty}\frac{\log \#(\mathcal{M}\cap I)}{\log |I|}\le  \limsup\limits_{|I'|\to \infty}\frac{\log \#(\mathcal{M}\cap I')}{\log |I'|}.
 \]
Therefore, there exist some $k\ge 1$ and $x_i', x_i\in\{0,1,\ldots, m_i-1\}~(1\le i\le k),  x_k\not=0$ such that
\[
a=\sum_{i=1}^k x_i't_i(n_1\cdots n_{i-1}),\quad b=\sum_{i=1}^k x_it_i(n_1\cdots n_{i-1}).
\]

Clearly, $x_k\ge x_k'$. We can assume that $x_k'\not= x_k.$ Otherwise, we consider the interval $I-x_kt_kn_1\cdots n_{k-1}$. We divide the proof into two cases.

\textsc{Case 1.} $x_k-x_k'\ge 2.$ Clearly, in this case, all $m_k>2$. We can get the following estimates:
\[
\#(\mathcal{M} \cap [a,b])\le m_1\cdots m_{k-1}(x_k-x_k')
\]
and
\[
\begin{split}
b-a&= (x_1-x_1')t_1+(x_2-x_2')t_2n_1+\cdots+(x_k-x_k')t_k(n_1\cdots n_{k-1})\\
             &=(x_1-x_1')t_1+(x_2-x_2')t_2n_1+\cdots+(x_{k-1}-x_{k-1}')t_{k-1}(n_1\cdots n_{k-2})\\
             &\phantom{\le}+(n_1\cdots n_{k-1})t_k+(x_k-x_k'-1)t_k(n_1\cdots n_{k-1})\\
             &\ge (x_k-x_k'-1)t_k(n_1\cdots n_{k-1}),
\end{split}
\]
where the last inequality follows from the assumption $\eqref{cfd}$.

Let
\[
f(s)=\frac{\log s(m_1\cdots m_{k-1})}{\log (s-1)t_k(n_1\cdots n_{k-1})}:=\frac{\log sa_{k-1}}{\log (s-1)c_{k-1}},\quad s\geq2.
\]
%\org{By elementary calculation and the assumption $n_k\ge 2m_k$ for all $k\ge 1$, we have
%\[
%\begin{split}
%(s-1)&c_{k-1}(\log (s-1)+\log c_{k-1})-(s+1)a_{k-1}(\log(s+1)+\log a_{k-1})\\
%&\ge (s-1)2^{k-1}a_{k-1}(\log (s-1)+\log 2^{k-1}a_{k-1})\\
%&\phantom{\le}-(s+1)a_{k-1}(\log(s+1)+\log a_{k-1})\quad\quad (\text{by $n_k\ge 2m_k$})\\
%&\ge 0
%\end{split}
%\]
%for sufficiently large $k.$}

Note that \[
\begin{split}
f(s+1)-f(s)&=\frac{\log (s+1)a_{k-1}}{\log sc_{k-1}}-\frac{\log sa_{k-1}}{\log (s-1)c_{k-1}}\\
&=\frac{\log (s+1)a_{k-1}\cdot\log (s-1)c_{k-1}-\log sa_{k-1}\cdot\log sc_{k-1}}{\log sc_{k-1}\cdot\log (s-1)c_{k-1}}.
\end{split}
\]
Because the denominator of the above formula is greater than $0$, we only consider the  numerator.
Write
\[
\begin{split}
g(s)=\log (s+1)a_{k-1}\cdot\log (s-1)c_{k-1}-\log sa_{k-1}\cdot\log sc_{k-1}.
\end{split}
\]
By elementary calculation and the assumption $n_k\ge 2m_k, m_k\ge 2, t_k\ge 2$ for all $k\ge 1$, we have
\[
\begin{split}
g(s)&=\log (s+1)\log (s-1)-\log s\log s+\log c_{k-1}\log \left(\frac{s+1}{s}\right)\\
&\phantom{\le}+\log a_{k-1}\log \left(\frac{s-1}{s}\right)\\
&\ge \log (s+1)\log (s-1)-\log s\log s+\log (2^{k-1}t_ka_{k-1})\log \left(\frac{s+1}{s}\right) \\
&\phantom{\le}+\left(\log \frac{s-1}{s}\right)\log a_{k-1}\\
&\ge \log (s+1)\log (s-1)-\log s\log s\\
&\phantom{\le}+\left((2k-1)\log \frac{s+1}{s}+(k-1)\log \frac{s-1}{s}\right)\log 2\\
&>0
\end{split}
\]
for sufficiently large $k$. This implies that $f(s)$ (for sufficiently large $k$) is increasing with respect to $s$, and noting that $x_k-x_k'\le m_k-1$ we have

\[
\begin{split}
\limsup\limits_{|I|\to \infty}\frac{\log \#(\mathcal{M} \cap I)}{\log |I|}&\le \limsup\limits_{k\to \infty}\frac{\log (x_k-x_k')(m_1\cdots m_{k-1})}{\log (x_k-x_k'-1)t_k(n_1\cdots n_{k-1})}\\
&\le \limsup\limits_{k\to \infty}\frac{\log (m_k-1)(m_1\cdots m_{k-1})}{\log (m_k-2)t_k(n_1\cdots n_{k-1})}\\
&\le\limsup\limits_{k\to \infty}\frac{\log m_1\cdots m_k}{\log m_kt_k(n_1\cdots n_{k-1})(1-\frac{2}{m_k})}\\
&=\limsup\limits_{k\to \infty}\frac{\log m_1\cdots m_k}{\log m_kt_kn_1\cdots n_{k-1}}.
\end{split}
\]
This, combined with \eqref{jsf2}, implies that \eqref{upperb} holds.

\textsc{Case 2.}  $x_k=x_k'+1$.  Let
\[
c=\sum_{i=1}^{k-1} (m_i-1)t_i(n_1\cdots n_{i-1})+x_k't_kn_1\cdots n_{k-1}.
\]
It follows from the assumption \eqref{cfd} that $c<b.$ We split $[a,b]$ into two disjoint subintervals $[a,b]=[a,c]\cup (c,b]$. Without loss of generality, we suppose $\#(\mathcal{M} \cap (c, b])\leq\#(\mathcal{M} \cap [a, c])$. Then  we have
\[
\#(\mathcal{M} \cap [a, b])\leq2\#(\mathcal{M} \cap [a, c]).
\]
If $m_i-1-x_i'\le 1$ for all $1\le i\le k-1$, we claim $\limsup\limits_{|I|\to \infty}\frac{\log \#(\mathcal{M} \cap I)}{\log |I|}=0$ and therefore $\eqref{upperb}$ holds trivially. In fact, if $x'_i=m_i-1$ for $1\le i\le k-1$ then $a=c$ and thus $\#(\mathcal{M} \cap [a, b])\le 2;$ if $x'_i=m_i-2$ for $1\le i\le k-1$ then we can check that $\#(\mathcal{M} \cap [a, c])\le k-1$ and $|I|\ge c-a=t_1+t_2n_1+\cdots+t_{k-1}n_1n_2\cdots n_{k-2}$. Noting that $n_k\ge 4$, we have
\[
\limsup\limits_{|I|\to \infty}\frac{\log \#(\mathcal{M} \cap I)}{\log |I|}\le \limsup\limits_{k\to \infty}\frac{\log 2(k-1)}{\log t_{k-1}n_1n_2\cdots n_{k-2}}=0.
\]

So, we next only need to consider the case that $m_i-1-x_i'\ge 2$ for some $1\le i\le k-1$.  Let
\[
u_k=\max\{1\le i\le k-1: m_i-1-x_i'\ge 2\},
\]
then $\#(\mathcal{M} \cap [a, c])\le m_1\cdots m_{u_k-1}(m_{u_k}-x_{u_k}')$.

On the other hand, by \eqref{cfd} again we have
\[
\begin{split}
b-a&\ge c-a\ge (m_1-1-x_1')t_1+\cdots+(m_{u_k}-1-x_{u_k}')t_{u_k}n_1\cdots n_{u_k-1}\\
    &\ge (m_{u_k}-x_{u_k}'-2)t_{u_k}n_1\cdots n_{u_k-1}.
\end{split}
\]

Therefore,  using the same argument in Case 1,
\[
\begin{split}
\limsup\limits_{|I|\to \infty}\frac{\log \#(\mathcal{M} \cap I)}{\log |I|}&\le \limsup\limits_{k\to \infty}\frac{\log (m_{u_k}-x_{u_k}')m_1\cdots m_{u_k-1}}{\log (m_{u_k}-x_{u_k}'-2)t_{u_k}n_1\cdots n_{u_k-1}}\\
& \le \limsup\limits_{k\to \infty}\frac{ \log m_1\ldots m_{u_k}}{\log m_{u_k}t_{u_k}n_1\cdots n_{u_k-1}}\\
&\le \limsup\limits_{k\to \infty}\frac{\log m_1\cdots m_k}{\log m_kt_kn_1\cdots n_{k-1}}.
\end{split}
\]

%\textit{Subcase II.} $x_k=x_k'.$ In this case, we also split $[a,b]$ into two disjoint subintervals $[a,b]=[a,c']\cup (c',b]$, where
%\[
%c'=\sum_{i=1}^{k-1} (m_i-1)(n_1\cdots n_{i-1})+(x_k'-1)n_1\ldots n_{k-1}.
%\]
%Let's remark that $c'=(m_1-1)+\cdots+(m_{k-1}-1)n_1\ldots n_{k-2}$ if $x_k'=1.$ Then, by the same argument in Subcase I we can prove $\eqref{upperb}$ \sout{holds} as well.

The proof of the lemma is completed.
\end{proof}

\section{Proofs of Theorems \ref{MR1} and \ref{MR2}}
This section is devoted to the proofs of Theorems $\ref{MR1}$ and \ref{MR2}.

For a singular spectral measure, it is challenging to characterize all its spectra.  However, it is possible to find some sufficient conditions to guarantee a discrete set is a spectrum for some spectral measure. In particular,  Dai and Sun \cite{DS} gave such a sufficient condition for the Moran measure $\mu$ by employing the tree labeling method initiated by Dutkay, Han and Sun \cite{DHS}. To state the result, we need some notation.  For $q\ge 1,$ write $\Sigma_q=\{0,1,\ldots, q-1\}$. For a sequence $\mathcal{D}=\{q_n\}_{n=1}^\infty$ of positive integers and $n\ge 1$, let
\[
\Sigma^0_\mathcal{D}=\vartheta, \quad  \Sigma^n_\mathcal{D}=\Sigma_{q_1}\times \Sigma_{q_2}\times \cdots \times \Sigma_{q_n},
\]
 and $\Sigma^*_\mathcal{D}=\cup_{n=1}^\infty \Sigma^n_\mathcal{D}.$

 For $m,n\ge 1$ and two words $\delta\in \Sigma^n_\mathcal{D}, \delta'\in \Sigma_{q_{n+1}}\times \cdots \times \Sigma_{q_{n+m}}$, we use $\delta \delta'$ to denote the concatenation of them. Of course, we set $\vartheta \delta=\delta$ for any word $\delta.$ We say $\Sigma^*_\mathcal{D}$ is a $\mathcal{D}$-\textit{adic tree} if we naturally set the root is $\vartheta,$ all the $n$-th level nodes are $\Sigma^n_\mathcal{D}$ and all the offsprings of $\delta\in \Sigma^n_\mathcal{D}$ are $\{\delta i:~i\in \Sigma_{q_{n+1}}\}$ for $n\ge 1$.

We say that $\tau: \Sigma^*_\mathcal{D}\to \R$ is a \textit{maximal tree mapping} if
\begin{enumerate}[(i)]
\item $\tau (\vartheta)=\tau({R_n(0^\infty)})=0$ for all $n\ge 1;$
\item $\tau(\delta_1\cdots \delta_n)\in (\delta_n+q_n \Z)\cap \{-1,0, 1, \ldots, b_n-2\}$ for $\delta_1\cdots \delta_n\in \Sigma^n_\mathcal{D}$ for $n\ge 1$;
\item for any word $\delta\in \Sigma^n_\mathcal{D}$ there exists $\delta'\in \Sigma_{q_{n+1}}\times \cdots \times \Sigma_{q_{n+m}}$ of length $m\ge 1$ such that $\tau(R_k(\delta\delta'0^\infty))=0$ for sufficiently large $k,$
\end{enumerate}
where $0^\infty:=000\cdots$ and $R_k(\delta)=\delta_1\cdots \delta_k\in \Sigma^k_\mathcal{D}$ for any $\delta\in \Pi_{n=1}^\infty \Sigma_{q_n}$.

The set $\{-1,0, 1, \ldots, b_n-2\}$ in $(\mathrm{ii})$ of the above definition can be replaced by any complete residual system modulo $b_n$ containing $0$. For example, Dai and Sun in their original paper \cite{DS} chose the set $\{-\lfloor b_n/2\rfloor,-\lfloor b_n/2\rfloor+1,\ldots,b_n-1-\lfloor b_n/2\rfloor\}$. In this paper, for our need, we choose the above set, which does not affect the following results.
%Let's remark that the maximal tree mapping has been employed in \cite{DHL,DS} to characterize the maximal orthogonal sets for some special Moran measures.\alert{???a little repeat with the previous part}

For a maximal tree mapping $\tau,$ we are interested in the following set
\begin{equation}\label{ms}
\begin{split}
\Lambda(\tau)=\Bigg\{\sum_{n=1}^\infty \tau(R_n(\delta 0^\infty))\rho_n: ~\text{$\delta\in \Sigma^*_\mathcal{D}$ }&\text{with  $\tau(R_n(\delta 0^\infty))=0$} \\
&\text{ for sufficiently large $n$}\Bigg\},
 \end{split}
\end{equation}
where $\rho_n=r_nb_1\cdots b_{n-1}, ~ n\ge 1.$

The following criterion for spectra of $\mu$ is due to Dai and Sun \cite{DS}.

\begin{theorem}[\cite{DS}]
\label{cri}
Let $\tau$ be a maximal tree mapping and let $\Lambda=\Lambda(\tau)$ be defined as in \eqref{ms}. If
\[
\sup_{n\ge 1}\sup_{\delta\in \Sigma^n_{\mathcal{D}}}\#\left\{j\ge 1:~ \tau(R_{n+j}(\delta 0^\infty))\not=0\right\}<\infty,
\]
then $\Lambda(\tau)$ is a spectrum of the Moran measure $\mu$ defined in \eqref{MM}.
\end{theorem}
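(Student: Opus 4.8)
The approach I would take is to verify the Jorgensen--Pedersen spectral criterion: for a compactly supported Borel probability measure $\mu$ on $\R$ and a countable set $\Lambda\subset\R$, the exponentials $\{e_\lambda\}_{\lambda\in\Lambda}$ form an orthonormal basis of $L^2(\mu)$ if and only if the entire function $Q_\Lambda(\xi):=\sum_{\lambda\in\Lambda}|\widehat\mu(\xi+\lambda)|^2$ is identically equal to $1$, and orthonormality of $\{e_\lambda\}_{\lambda\in\Lambda}$ alone already forces $Q_\Lambda\le 1$. So the plan is to show (a) the exponentials indexed by $\Lambda(\tau)$ are orthonormal, i.e.\ $(\Lambda(\tau)-\Lambda(\tau))\setminus\{0\}$ lies in the zero set of $\widehat\mu$, and (b) $Q_{\Lambda(\tau)}\ge 1$. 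Writing $N_n=b_1b_2\cdots b_n$, so that $\rho_n=r_nN_{n-1}$ and $N_n=q_n\rho_n$, one has
\[
\widehat\mu(\xi)=\prod_{n=1}^\infty\frac{1}{q_n}\sum_{j=0}^{q_n-1}e^{-2\pi i j\xi/N_n},
\]
a product that converges absolutely since each $b_n\ge 4$; its $n$-th factor vanishes exactly on $\rho_n\Z\setminus N_n\Z$, so the zero set of $\widehat\mu$ contains $\bigcup_{n\ge1}(\rho_n\Z\setminus N_n\Z)$.

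For (a), the key observation is that a maximal tree mapping is \emph{branch-recovering}: condition (ii) gives $\tau(\delta_1\cdots\delta_n)\equiv\delta_n\pmod{q_n}$, so along any sequence $\eta\in\prod_{n=1}^\infty\Sigma_{q_n}$ the coordinate $\eta_n$ is determined by $\tau(R_n(\eta))$ modulo $q_n$, and hence the whole sequence of labels determines $\eta$. A typical element of $\Lambda(\tau)$ has the form $\lambda=\sum_n\tau(R_n(\eta))\rho_n$ for such an $\eta$ along which the labels are eventually $0$. Given distinct $\lambda,\lambda'$ coming from $\eta,\eta'$, let $n_0$ be the first level where their labels differ. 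By branch-recovery, $\eta$ and $\eta'$ agree in coordinates $1,\dots,n_0-1$, and they must differ in coordinate $n_0$ (otherwise $R_{n_0}(\eta)=R_{n_0}(\eta')$, forcing the two $n_0$-th labels to coincide); since $\eta_{n_0},\eta'_{n_0}\in\{0,\dots,q_{n_0}-1\}$ this yields $\tau(R_{n_0}(\eta))\not\equiv\tau(R_{n_0}(\eta'))\pmod{q_{n_0}}$. As $\rho_n/\rho_{n_0}\in q_{n_0}\Z$ for every $n>n_0$, we get $\lambda-\lambda'=\rho_{n_0}m$ with $m\equiv\tau(R_{n_0}(\eta))-\tau(R_{n_0}(\eta'))\not\equiv 0\pmod{q_{n_0}}$, hence $\lambda-\lambda'\in\rho_{n_0}\Z\setminus N_{n_0}\Z$ and $\widehat\mu(\lambda-\lambda')=0$. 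The same congruence argument restricted to levels $\le n$ shows that the truncation $\Lambda_n(\tau):=\{\sum_{k\le n}\tau(\delta_1\cdots\delta_k)\rho_k:\delta_1\cdots\delta_n\in\Sigma^n_\mathcal{D}\}$ consists of exactly $q_1\cdots q_n$ points whose exponentials form an orthonormal set in $L^2(\mu^{(n)})$, where $\mu^{(n)}=\delta_{b_1^{-1}\mathcal{D}_1}\ast\cdots\ast\delta_{(b_1\cdots b_n)^{-1}\mathcal{D}_n}$; since the atoms of $\mu^{(n)}$ are distinct, $\dim L^2(\mu^{(n)})=q_1\cdots q_n$, so $\Lambda_n(\tau)$ is a spectrum of $\mu^{(n)}$, equivalently $\sum_{\lambda\in\Lambda_n(\tau)}|\widehat{\mu^{(n)}}(\xi+\lambda)|^2=1$ for all $\xi$.

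It remains to establish (b), which I expect to be the technical heart of the proof. The route is finite approximation: factor $\mu=\mu^{(n)}\ast\nu_n$ with $\nu_n$ supported in an $O(N_n^{-1})$-neighbourhood of $0$ (so $\mu^{(n)}\to\mu$ weakly and $\widehat\mu=\widehat{\mu^{(n)}}\,\widehat{\nu_n}$), and deduce $Q_{\Lambda(\tau)}(\xi)\ge 1$ from the finite-level identities $\sum_{\lambda\in\Lambda_n(\tau)}|\widehat{\mu^{(n)}}(\xi+\lambda)|^2=1$; together with $Q_{\Lambda(\tau)}\le 1$ from (a) this forces $Q_{\Lambda(\tau)}\equiv 1$. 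The delicate point is the comparison between $\Lambda(\tau)$ and the finite spectra $\Lambda_n(\tau)$, which are not literally contained in $\Lambda(\tau)$: turning a level-$n$ label pattern into a bona fide element of $\Lambda(\tau)$ requires extending the finite word via condition (iii), which introduces extra nonzero labels deeper in the tree, and one must also control the size of the indices $\lambda$ contributing to the finite-level sums relative to $N_n$ so that the factor $\widehat{\nu_n}(\xi+\lambda)$ does not spoil the limit. This is precisely where the admissibility hypothesis $C:=\sup_{n\ge1}\sup_{\delta\in\Sigma^n_\mathcal{D}}\#\{j\ge1:\tau(R_{n+j}(\delta 0^\infty))\ne 0\}<\infty$ is indispensable: it guarantees that every branch of the $\mathcal{D}$-adic tree carries at most $C$ nonzero labels beyond any prescribed level, so the discrepancy between $\Lambda_n(\tau)$ and a suitable subset of $\Lambda(\tau)$ involves only boundedly many terms and no spectral mass ``escapes to infinity'' as $n\to\infty$; in effect admissibility makes each branch behave like a finite tree, reducing (b) to the finite-level statement established in (a). Combining (a) and (b) gives $Q_{\Lambda(\tau)}\equiv 1$, so $\Lambda(\tau)$ is a spectrum of $\mu$.
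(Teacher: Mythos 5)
The paper does not actually prove Theorem \ref{cri}: it is imported verbatim from Dai and Sun \cite{DS} (the text even notes that only a corollary of their stronger result is being quoted), so there is no internal proof to compare yours against. Judged on its own terms, your part (a) is correct and essentially complete: the congruence $\tau(\delta_1\cdots\delta_n)\equiv\delta_n\pmod{q_n}$ from condition (ii), the identity $\rho_n/\rho_{n_0}=r_nq_{n_0}b_{n_0+1}\cdots b_{n-1}\in q_{n_0}\Z$ for $n>n_0$, and the zero set $\bigcup_n(\rho_n\Z\setminus b_1\cdots b_n\Z)\subset\mathcal{Z}(\widehat\mu)$ together give mutual orthogonality exactly as you argue, and the dimension count for $\mu^{(n)}$ is fine.

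Part (b), however, is where the theorem actually lives, and you have only described a plan, not executed it. Moreover, the plan as stated has a concrete soft spot. The hypothesis $C<\infty$ bounds the \emph{number} of nonzero labels on each branch beyond level $n$, not their \emph{positions}: the correction $\lambda_\delta-\lambda_{\delta,n}=\sum_{k>n}\tau(R_k(\delta 0^\infty))\rho_k$ can therefore be arbitrarily large compared with $N_n=b_1\cdots b_n$, so your suggestion that ``the discrepancy involves only boundedly many terms and no spectral mass escapes'' does not by itself let you compare $\widehat{\nu_n}(\xi+\lambda_\delta)$ with $1$. What rescues the finite-approximation scheme is a periodicity observation you do not make: every $\rho_k$ with $k>n$ is a multiple of $N_n$, and $\widehat{\mu^{(n)}}$ is $N_n$-periodic, so $\widehat{\mu^{(n)}}(\xi+\lambda_\delta)=\widehat{\mu^{(n)}}(\xi+\lambda_{\delta,n})$ and the level-$n$ identity $\sum_{\delta\in\Sigma^n_{\mathcal D}}|\widehat{\mu^{(n)}}(\xi+\lambda_\delta)|^2=1$ survives the passage from $\Lambda_n(\tau)$ to the genuine subset $\{\lambda_\delta:\delta\in\Sigma^n_{\mathcal D}\}$ of $\Lambda(\tau)$. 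Even granting that, you still owe a uniform lower bound on the tail products $\prod_{k>n}|M_k((\xi+\lambda_\delta)/N_k)|^2$ (equivalently, an upper bound on $\sum_\delta|\widehat{\mu^{(n)}}(\xi+\lambda_\delta)|^2\bigl(1-|\widehat{\nu_n}(\xi+\lambda_\delta)|^2\bigr)$ tending to $0$), and this is precisely the estimate in which the uniform bound $C$ and the structure of the labels at zero coordinates (where $\tau(R_k(\delta 0^\infty))\in q_k\Z$) must be used quantitatively. Until that estimate is supplied, the completeness half of the theorem — and hence the theorem itself — remains unproved in your write-up; for the present paper's purposes the correct move is simply to cite \cite{DS}, as the authors do.
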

In fact, a stronger result (Theorem \ref{cri} is just a corollary) was presented in \cite{DS}. However, the above result is enough for us.

Now we are ready to prove Theorem \ref{MR1}. The difficult part is the case $0<t<\ue \mu$. Our main idea is to generalize the way of constructing a new spectrum for a self-similar spectral measure with consecutive digit sets in \cite{DHL} to our Moran case and divide the constructed spectra into two parts: regular part (subset of canonical spectra) and irregular part. We then  obtain the intermediate value property of Beurling dimension of regular part. Moreover, we can calculate that the Beurling dimension of irregular part is zero. By the finite stability of Beurling dimension, we obtain the desired result.

\begin{proof}[Proof of Theorem  \ref{MR1}]

For any $n\ge1,$  there exists a unique $k:=k_n$ and $\sigma=\sigma_1\cdots \sigma_{k}\in \Sigma^k_\mathcal{D}$ with $\sigma_k\not=0$ such that
\begin{equation}\label{exn}
	n=\sigma_1+\sigma_2q_1+\cdots+\sigma_{k}q_1\cdots q_{k-1}=\sum_{j=1}^{k}\sigma_jq_1\cdots q_{j-1}.
\end{equation}
In fact, for $n\ge 1$, there exist integers $t_1\ge 0$ and $\sigma_1\in \Sigma_{q_1}$ such that $n=q_1t_1+\sigma_1$. Similarly, we can write $t_1=q_2t_2+\sigma_2$, where $0\le t_2\in \N$ and $\sigma_2\in \Sigma_{q_2}$ and therefore $n=t_2q_1q_2+\sigma_2q_1+\sigma_1$. Continuing the procedure, there will exist some integer $k$ such that  $n=\sigma_1+\sigma_2q_1+\cdots+t_{k-1}q_1\cdots q_{k-1}, t_{k-1}\not=0$ and $t_{k-1}<q_{k}$. We stop at step $k-1$ and obtain \eqref{exn} by letting $t_{k-1}=\sigma_k$.

By the above construction, each positive integer  $n\ge 1$ corresponds a unique word $\sigma=\sigma(n):=\sigma_1\cdots \sigma_k\in \Sigma^k_\mathcal{D}$ with $\sigma_k\not=0$. Conversely, every word $\sigma=\sigma_1\cdots \sigma_k\in \Sigma^k_\mathcal{D}$ with $\sigma_k\not=0$, corresponds a unique integer $n:=n(\sigma)$ which has the expansion \eqref{exn}.

Let $\{s_n\}_{n=1}^\infty$ be a sequence of non-negative integers. We define a mapping by $\tau(\vartheta)=\tau(0^k)=0$ for $k\ge 1$, and for $n$ as in \eqref{exn}, $\tau(\sigma)=\sigma_k$, and
\begin{equation}\label{mm}
	\tau(\sigma0^l)=
	\begin{cases}
		0,& \text{if $l\not=s_n;$}\\
		q_{k+s_n},& \text{if $l=s_n.$}
	\end{cases}
\end{equation}
Define $\lambda_0=0$,  and
\[
\lambda_n=\sum_{j=1}^{k} \tau(\sigma_1\cdots \sigma_j)(r_jb_1\cdots b_{j-1})+b_1\cdots b_{k+s_n}\delta_{s_n},
\]
	\begin{equation*}
	\delta_{s_n}=
	\begin{cases}
		0,& \text{if $ ~s_n=0;$}\\
		1,& \text{if $ ~s_n\geq 1.$}
	\end{cases}
\end{equation*}
Then, it is easy to see that  $\tau$ is a maximal tree mapping and
\[
\text{$\ell_n:=\#\{k:\tau(\sigma0^l)\neq0,l\geq1\}\leq 1$ for all $n\geq1$.}
\]
By Theorem \ref{cri}, we have that $\Lambda(\{s_n\}):=\{\lambda_n\}_{n=0}^\infty$ is a spectrum of $\mu$ and we call $\Lambda(\{s_n\})$ the spectrum of $\mu$ with respective to the sequence $\{s_n\}$.

When $s_n=0$ for all $n\geq1$, it is easy to see that
\[
\Lambda(0):=\Lambda(\{s_n\})=\bigcup_{k=1}^\infty \left\{\sum_{i=1}^k x_i(r_ib_1\cdots b_{i-1}):~\text{$x_i\in \Sigma_{q_i}$ for $1\le i\le k$}\right\}.
\]
$\Lambda(0)$ is called the {\it canonical spectrum} of $\mu$. It follows from Proposition \ref{Bdims}, Proposition \ref{ub} and the fact that $r_kq_k=b_k$ for $k\ge 1$ that

\[
\begin{split}
	\dim \Lambda(0)&=\limsup\limits_{k\to \infty}\frac{\log q_1\cdots q_k}{\log q_k (r_kb_1\cdots b_{k-1})}\\
	&=\limsup\limits_{k\to \infty}\frac{\log q_1q_2\cdots q_k}{\log b_1b_2\cdots b_k}=\ue \mu.
\end{split}
\]

So, when $t=\ue \mu$ we choose $\Lambda_t=\Lambda(0)$.

We next consider the other two cases: $t=0$ and $0<t<\ue \mu.$

\textsc{Case 1}. $t=0$.  Consider the spectrum of $\mu$ with respective to $\{n\}$, i.e., $\Lambda(\{n\})=\{\lambda_n\}_{n=0}^{\infty}$. Noting that for $n\geq1$, we have $\lambda_n\geq b_1b_2\cdots b_{k_n+n}$ and
\[
\begin{split}
		\lambda_n&\leq b_1+b_1b_2+\cdots+ b_1b_2\cdots b_{k_n}+b_1b_2\cdots b_{k_n+n}\\
		&\leq2b_1b_2\cdots b_{k_n+n}.
\end{split}
\]
Noting that $b_n\ge 4$ for any $n\ge 1$, this implies that
\[
\frac{\lambda_{n+1}}{\lambda_n}\geq\frac{b_1b_2\cdots b_{k_{n+1}+n+1}}{2b_1b_2\cdots b_{k_n+n}}\geq \frac{b_{k_{n+1}+n+1}}{2}\ge 2.
\]
By Lemma \ref{lac}, we have that $\dim(\Lambda(\{n\}))=0$. So, take $\Lambda_0=\Lambda(\{n\}).$

\textsc{Case 2}. $0<t<\ue \mu$. We start from the following lemma.

\begin{lemma}\label{mid}
Suppose  $\{q_k\}_{k=1}^\infty, \{b_k\}_{k=1}^\infty$ be two sequences of positive integers satisfying \eqref{cond1} and \eqref{conn}. Then,  for any
\[
0<t <\limsup\limits_{k\to \infty}\frac{\log (q_1\cdots q_k)}{\log(b_1\cdots b_k)},
\]
there exists a sequence $q_k'\in \{1, 2,\ldots q_k\}, k\ge 1$ such that
\begin{equation}\label{ssp}
t=\limsup\limits_{k\to \infty}\frac{\log (q_1'\cdots q_k')}{\log (b_1\cdots b_{k})}.
\end{equation}
\end{lemma}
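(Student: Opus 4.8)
The plan is to read Lemma~\ref{mid} as a purely combinatorial statement about partial sums and to produce the required sequence by a greedy ``bang--bang'' construction in which each $q_k'$ is taken to be either $q_k$ or $1$, so the integrality constraint $q_k'\in\{1,\dots,q_k\}$ never causes trouble. Abbreviate $B_k:=\log(b_1\cdots b_k)$ and $s:=\limsup_{k\to\infty}\log(q_1\cdots q_k)/B_k$, so that $0<t<s$, and for an admissible choice $(q_k')_{k\ge1}$ write $P_k:=\log(q_1'\cdots q_k')$. Since each $q_k'$ equals $q_k$ or $1$ and $2\le q_k\le b_k\le M$, one has $0\le P_k-P_{k-1}=\log q_k'\le\log M$ and $B_k-B_{k-1}=\log b_k\ge\log 2$, whence $B_k\ge k\log 2\to\infty$ and $P_{k-1}\le B_{k-1}$. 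The identity driving the whole argument is
\[
\frac{P_k}{B_k}-\frac{P_{k-1}}{B_{k-1}}=\frac{(\log q_k')B_{k-1}-P_{k-1}\log b_k}{B_{k-1}(B_{k-1}+\log b_k)},
\]
whose absolute value is at most $\log M/B_{k-1}$ (using $P_{k-1}\le B_{k-1}$ and $\max(\log q_k',\log b_k)\le\log M$) and hence tends to $0$; this is the one place where the boundedness hypothesis~\eqref{conn} enters. I would also record the elementary fact that if $q_j'=q_j$ for all $j\ge k_0$, then $P_k=c+\log(q_1\cdots q_k)$ for a constant $c$, so that $\limsup_kP_k/B_k=s$.

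Next I would build the sequence greedily: set $q_1'=1$, and for $k\ge2$ call $k$ a \emph{keep step}, with $q_k'=q_k$, when $P_{k-1}/B_{k-1}<t$, and a \emph{drop step}, with $q_k'=1$, otherwise (any non-keep step has $q_k'=1$ since $q_k\ge2$). The goal is then $\limsup_kP_k/B_k=t$, which is exactly~\eqref{ssp}.

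For the upper bound I would fix $\delta>0$, pick $K$ with $\log M/B_{K-1}<\delta$, and split into cases for $k\ge K$. If some step in $[K,k]$ is a keep step, let $j$ be the largest one; then $P_j/B_j\le P_{j-1}/B_{j-1}+\log M/B_{j-1}<t+\delta$ by the displayed estimate and the rule, while every step in $(j,k]$ is a drop step, so $P_k=P_j$, $B_k\ge B_j$, and $P_k/B_k\le P_j/B_j<t+\delta$. If no step in $[K,k]$ is a keep step, then $q_i'=1$ for all $K\le i\le k$, so $P_k=P_{K-1}$ and $P_k/B_k=P_{K-1}/B_k<t+\delta$ once $k$ is large, since $B_k\to\infty$. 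Hence $\limsup_kP_k/B_k\le t+\delta$ for every $\delta>0$. For the lower bound: if there are infinitely many drop steps, then $P_{j-1}/B_{j-1}\ge t$ along infinitely many $j$, so $\limsup_kP_k/B_k\ge t$; if the drop steps are eventually absent, then $q_j'=q_j$ for all large $j$ and the recorded fact gives $\limsup_kP_k/B_k=s>t$. Either way $\limsup_kP_k/B_k\ge t$, so~\eqref{ssp} holds.

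The step I expect to be the crux is the upper bound --- preventing $P_k/B_k$ from creeping above $t$ during a long run of keep steps. This is precisely where $\sup_nb_n<\infty$ is indispensable: it makes each per-step change $O(1/B_{k-1})=o(1)$, so a keep run, which by construction starts below $t$, can overshoot $t$ by only $o(1)$ before the rule switches to dropping, and drop runs only lower the ratio, so no further excursions occur. Everything else --- the single limit computation and the case bookkeeping above --- is routine, and the discreteness of the allowed $q_k'$ plays no role since only the extreme choices $q_k$ and $1$ are ever used.
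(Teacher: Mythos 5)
Your proof is correct, and it is essentially the paper's argument: both replace selected $q_k$ by $1$ via a threshold rule at level $t$, and both use the boundedness $\sup_n b_n=M<\infty$ to show each step moves the ratio $\log(q_1'\cdots q_k')/\log(b_1\cdots b_k)$ by $O(1/\log(b_1\cdots b_{k-1}))=o(1)$, so the overshoot above $t$ vanishes. The only cosmetic difference is that you state the rule as a per-step greedy decision while the paper organizes it into explicit alternating runs indexed by a sequence $\{k_j\}$; your per-step formulation is arguably cleaner and avoids the paper's slightly awkward bookkeeping at the transition indices.
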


\begin{proof}
Let
\begin{equation}\label{assu}
0<t <\limsup\limits_{k\to \infty}\frac{\log (q_1\cdots q_k)}{\log(b_1\cdots b_k)}.
\end{equation}
We next inductively define a sequence of positive integers $\{k_j\}_{j=1}^\infty$  as follows. The idea is to replace some $q_k$s with $1$s at proper positions. Define $k_1=1$ if $t<\frac{\log q_2}{\log(b_1b_2)}$; otherwise, due to $\eqref{assu}$ we define
\[
k_1=\min\left\{i:\frac{\log (1\cdot q_2\cdots q_i)}{\log(b_1\cdots b_i)}\le t<\frac{\log (1\cdot q_2\cdots q_{i+1})}{\log(b_1\cdots b_{i+1})}\right\}
\]
and $q_1'=1, q_i'=q_i, 2\le i\le k_1+1.$

Then, define
\[
k_2=\min\left\{i:\frac{\log (q_1'\cdot \cdots q_{k_1+1}'1\cdots 1\cdot q_{i})}{\log(b_1\cdots b_i)}\le t<\frac{\log (q_1'\cdot\cdots q_{k_1+1}'1\cdots q_i\cdot q_{i+1})}{\log(b_1\cdots b_{i+1})}\right\}
\]
and $q'_{i}=1, k_1+2\le i\le k_2-1, q'_{k_2}=q_{k_2}, q'_{k_2+1}=q_{k_2+1}.$ Continuing inductively, we get a sequence of integers $\{k_j\}_{j=1}^\infty$ and a sequence of $\{q_k'\}_{k=1}^\infty$ such that
\begin{equation}\label{tps}
k_{j+1}=\min\left\{i:\frac{\log (q_1'\cdot \cdots q_{k_{j}+1}'1\cdots 1\cdot q_{i})}{\log(b_1\cdots b_i)}\le t<\frac{\log (q_1'\cdot\cdots q_{k_{j}+1}'1\cdots q_i\cdot q_{i+1})}{\log(b_1\cdots b_{i+1})}\right\}
\end{equation}
and $q'_{i}=1, k_j+2\le i\le k_{j+1}-1, q'_{k_{j+1}}=q_{k_{j+1}}, q'_{k_{j+1}+1}=q_{k_{j+1}+1}, j\ge 1.$

It follows from \eqref{tps} that
\[
\text{$\frac{\log (q_1'\cdot \cdots q_{k_{j}}')}{\log(b_1\cdots b_{k_j})}\le t<\frac{\log (q_1'\cdot \cdots q_{k_{j}}'q_{k_j+1})}{\log(b_1\cdots b_{k_j+1})}$ for $j\ge 1$.}
\]
So, due to assumption \eqref{conn}, we have
\[
\lim\limits_{j\to\infty}\frac{\log (q_1'\cdot \cdots q_{k_{j}}')}{\log(b_1\cdots b_{k_j})}=\lim\limits_{j\to\infty}\frac{\log (q_1'\cdot \cdots q_{k_{j}}'q_{k_j+1})}{\log(b_1\cdots b_{k_j+1})}=t.
 \]
On the other hand, for $j\ge 2$ and any $k_j<k<k_{j+1}$,  it follows from the definition of $\{k_j\}_{j=1}^\infty$ that $\frac{\log (q_1'\cdot \cdots q_{k}')}{\log(b_1\cdots b_{k})}\le t$. Hence, \eqref{ssp} holds.
\end{proof}

\begin{prop}
	For any $t\in(0,\ue \mu)$, there exists a spectrum $\Lambda_t$ of $\mu$ such that
\[
\dim \Lambda_t=t.
\]
\end{prop}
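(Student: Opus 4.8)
The plan is to use Lemma \ref{mid} to produce, from the target value $t\in(0,\ue\mu)$, a sequence $q_k'\in\{1,2,\dots,q_k\}$ with
\[
t=\limsup_{k\to\infty}\frac{\log(q_1'\cdots q_k')}{\log(b_1\cdots b_k)},
\]
and then to build a spectrum of $\mu$ whose ``regular part'' is essentially the canonical spectrum associated with the reduced digit sets $\mathcal{D}_k'=\{0,1,\dots,q_k'-1\}$. Concretely, I would choose the sequence $\{s_n\}$ defining the maximal tree mapping $\tau$ in \eqref{mm} so that for integers $n$ whose $\sigma(n)$-expansion \eqref{exn} only uses digits below the $q_k'$ thresholds we set $s_n=0$, and for the remaining $n$ we set $s_n$ to grow rapidly (e.g. $s_n=n$), so that those $\lambda_n$ are pushed far out and contribute a $b$-lacunary, hence $0$-dimensional, set. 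Then $\Lambda_t:=\Lambda(\{s_n\})$ splits as $\Lambda_t=\Lambda^{\mathrm{reg}}\cup\Lambda^{\mathrm{irr}}$, where $\Lambda^{\mathrm{reg}}$ is (a translate-free version of) $\bigcup_k\{\sum_{i=1}^k x_i r_i b_1\cdots b_{i-1}: x_i\in\Sigma_{q_i'}\}$ and $\Lambda^{\mathrm{irr}}$ collects the rapidly-escaping points.

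The key steps, in order, are: (1) invoke Lemma \ref{mid} to get $\{q_k'\}$; (2) define the associated $\{s_n\}$ precisely and verify $\tau$ is a maximal tree mapping with $\sup_n\ell_n\le 1$, so Theorem \ref{cri} applies and $\Lambda_t$ is a spectrum of $\mu$; (3) identify $\Lambda^{\mathrm{reg}}$ as an integer Moran set in the sense of \eqref{IMS} with data $n_i=q_i$, $t_i=r_i$ (so that $t_i n_1\cdots n_{i-1}=r_i b_1\cdots b_{i-1}/q_1\cdots q_{i-1}$, after extracting the common factor) and $m_i=q_i'$, check the admissibility condition \eqref{cfd}, and apply Proposition \ref{Bdims} to get
\[
\dim\Lambda^{\mathrm{reg}}=\limsup_{k\to\infty}\frac{\log m_1\cdots m_k}{\log m_k t_k n_1\cdots n_{k-1}}=\limsup_{k\to\infty}\frac{\log q_1'\cdots q_k'}{\log q_k' r_k b_1\cdots b_{k-1}/(q_1\cdots q_{k-1})};
\]
then, using $r_k q_k=b_k$ and condition \eqref{conn} (which bounds $q_k'\le q_k\le M$, so $\log q_k'$ and $\log q_k$ are negligible against $\log b_1\cdots b_{k-1}$), simplify the right-hand side to $\limsup_k \frac{\log q_1'\cdots q_k'}{\log b_1\cdots b_k}=t$; (4) show $\dim\Lambda^{\mathrm{irr}}=0$, by the same $b$-lacunarity estimate used in Case 1 (the escaping points satisfy $\lambda_{n}\asymp b_1\cdots b_{k_n+s_n}$ with $s_n\to\infty$, giving consecutive ratios $\ge 2$, so Lemma \ref{lac} gives dimension $0$; one must be slightly careful that interleaving with the regular points does not spoil lacunarity, but since the escaping points dominate in size and the regular part is already handled separately, finite stability \eqref{inc}(ii) is the clean way out); (5) conclude by Lemma \ref{inc}(ii) that $\dim\Lambda_t=\max(\dim\Lambda^{\mathrm{reg}},\dim\Lambda^{\mathrm{irr}})=\max(t,0)=t$.

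The main obstacle I anticipate is step (3): matching the concrete set $\Lambda^{\mathrm{reg}}$ to the formal integer Moran set $\mathcal{M}(\{n_k\},\{m_k\},\{t_k\})$ so that the admissibility hypothesis \eqref{cfd} genuinely holds. The coefficients $r_i b_1\cdots b_{i-1}$ are not literally of the form $t_i n_1\cdots n_{i-1}$ unless one absorbs the factor $q_1\cdots q_{i-1}$ correctly, and one needs $m_i=q_i'$ possibly equal to $1$ at many indices (the ``skipped'' positions from Lemma \ref{mid}), so the argument must accommodate $m_i=1$ — which Proposition \ref{Bdims} does allow, but the reindexing has to be done with care so that the $\limsup$ is computed along the right subsequence $\{k_j\}$. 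A secondary subtlety is ensuring the bookkeeping between the ``$n$-indexing'' of $\{s_n\}$ in \eqref{mm} and the ``digit-block'' description of $\Lambda^{\mathrm{reg}}$ is consistent, i.e. that declaring $s_n=0$ exactly on the integers $n$ with all $\sigma_j(n)<q_j'$ really does carve out the sub-union $\bigcup_k\{\sum x_i r_i b_1\cdots b_{i-1}: x_i<q_i'\}$ and nothing more; this is essentially combinatorial and should go through, but it is where a careless argument could go wrong. Finally one should double-check that $0<t<\ue\mu$ forces infinitely many $q_k'\ge 2$, so $\Lambda^{\mathrm{reg}}$ is genuinely infinite and the $\limsup$ formula is not vacuously $0$.
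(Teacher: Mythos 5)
Your proposal is correct and follows essentially the same route as the paper: Lemma \ref{mid} to produce $\{q_k'\}$, the tree-mapping construction with $s_n=0$ on the good indices and $s_n=n$ elsewhere, Proposition \ref{Bdims} for the regular part, lacunarity plus monotonicity (the irregular part sits inside $\Lambda(\{n\})$) for the irregular part, and finite stability to combine. The ``main obstacle'' you anticipate in step (3) dissolves once you take the integer Moran data to be $n_i=b_i$, $m_i=q_i'$, $t_i=r_i$, so that $t_i n_1\cdots n_{i-1}=r_i b_1\cdots b_{i-1}$ exactly and no rescaling by $q_1\cdots q_{i-1}$ is needed; this is precisely the paper's choice.
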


\begin{proof} Let $0<t<\ue \mu$ and $\{q_k'\}_{k=1}^\infty$ be the sequence in Lemma \ref{mid}. Write
\[
\Lambda_t^1=\bigcup_{k=1}^\infty \left\{\sum_{i=1}^k x_i(r_ib_1\cdots b_{i-1}):~\text{$x_i\in \Sigma_{q_i'}$ for $1\le i\le k$}\right\}.
\]

Then $\Lambda_t^1$ is an integer Moran set associated with the data $\{b_k\}, \{q_k'\}$and $\{r_k\}$. By Proposition \ref{Bdims}, Lemma \ref{mid} and the assumption \eqref{conn}, we have
 \[
 \begin{split}
 \dim \Lambda_t^1&=\limsup\limits_{k\to \infty}\frac{\log (q'_1\cdots q'_k)}{\log(q_k'r_kb_1\cdots b_{k-1})}\\
 &=\limsup\limits_{k\to \infty}\frac{\log (q'_1\cdots q'_k)}{\log(b_1\cdots b_{k})+\log (q_k'/q_k)}\\
 &=\limsup\limits_{k\to \infty}\frac{\log (q'_1\cdots q'_k)}{\log(b_1\cdots b_{k})}\quad \left(\frac{1}{M}\le \frac{q_k'}{q_k}\le 1\right)\\
 &=t
 \end{split}
 \]

For $k\ge 1$, let $\mathcal{D}'=\{q_n'\}_{n=1}^\infty$ and write
\[
\widetilde{\Sigma}^k_{\mathcal{D}'}:=\{(\sigma_1'\sigma_2'\ldots \sigma_k')\in \Sigma^k_{\mathcal{D}'}: \sigma_k'\not=0\}.
\]
Define $\Gamma_t=\cup_{k=1}^\infty \Gamma_k$, where $\Gamma_k=\{n=\sum_{j=1}^{k}\sigma_jq_1\cdots q_{j-1}:\sigma\in \widetilde{\Sigma}^k_{\mathcal{D}'}\}$.

Let $s_n=n$ for  $n\notin \Gamma_t$, and $s_n=0$ for $n\in \Gamma_t$. Let $\Lambda_t$ be the corresponding spectrum of $\mu$ with respect to the sequence $\{s_n\}_{n=1}^\infty$. It is easy to check that  $\Lambda_t^1=\{\lambda_n\}_{n\in \Gamma_t}$.  Write
\begin{equation}\label{deco}
\Lambda_t=\{\lambda_n\}_{n\in \Gamma_t}\cup\{\lambda_n\}_{n\notin \Gamma_t}:=\Lambda_t^1\cup\Lambda_t^2.
\end{equation}
	%We can write $\Lambda(0)$ as the following union of two parts:
%	
%	\[\begin{split}\Lambda(0)&=\left\{\sum_{i=1}^{2k} x_i(r_ib_1\cdots b_{i-1}):~\text{$x_i\in \Sigma_{q_i}$ for $1\le i\le k,k\geq1$}\right\}\\
%		&\cup\left\{\sum_{i=1}^{2k+1} x_i(r_ib_1\cdots b_{i-1}):~\text{$x_i\in \Sigma_{q_i}$ for $1\le i\le k,k\geq0$}\right\}\\
%		&=:\Lambda_1\cup \Lambda_2.
%		\end{split}\]
%Then we consider the set  $\Lambda_1$. We can rewrite it as the following:
%\[\begin{split}\Lambda_1&=\bigg\{\sum_{i=1}^{k} (x_{2i-1}r_{2i-1}+x_{2i}r_{2i}b_{2i-1})b_1b_2\cdots b_{2(i-1)}:\\
%	&\qquad\qquad\qquad\quad\qquad\text{$x_{2i-1}\in \Sigma_{q_{2i-1}},x_{2i}\in \Sigma_{q_{2i}}$
%		for $1\le i\le k,k\geq1$}\bigg\}\\
%&=\left\{\sum_{i=1}^{k} y_i b_1b_2\cdots b_{2(i-1)}:~\text{$y_i=x_{2i-1}r_{2i-1}+x_{2i}r_{2i}b_{2i-1}$ for $1\le i\le k,k\geq1$}\right\}.
%\end{split}\]
%It is easy to see that $\dim\Lambda_1=\limsup\limits_{k\to \infty}\frac{\log q_1q_2\cdots q_{2k}}{\log b_1b_2\cdots b_{2k}}$ by similar way as in Lemma \ref{Bdims}.
%
%Continuing inductively, we obtain a sequence of subsets $\Lambda_l$ of $\Lambda(0)$ as follows:
%\[\Lambda_l
%	=\left\{\sum_{i=1}^{2^lk} x_i (r_ib_1b_2\cdots b_{i-1}):~\text{$x_i\in \Sigma_{q_i}$ for $1\le i\le k,k\geq1$}\right\}.
%\]
%Similarly, we have $\dim\Lambda_l=\limsup\limits_{k\to \infty}\frac{\log q_1q_2\cdots q_{2^lk}}{\log b_1b_2\cdots b_{2^lk}}$, and each $\Lambda_l$ has the same Beurling dimension $\limsup\limits_{k\to \infty}\frac{\log q_1q_2\cdots q_{k}}{\log b_1b_2\cdots b_{k}}$ by the finite stability of Beurling dimension.

We next use Lemma \ref{lac} to show that $\dim \Lambda_t^2=0$. In fact, as in Case 1, we have  $\dim(\Lambda(\{n\}))=0$. On the other hand, it is easy to check that $\Lambda_t^2\subset \Lambda(\{n\})$. By Lemma  \ref{inc} (i), we have $\dim\Lambda_t^2=0$.

It follows from \eqref{deco} and the finite stability of Beurling dimension that
\[
\dim \Lambda_t=\max(\dim \Lambda_t^1, \dim \Lambda_t^2) =t,
\]
 and therefore $\Lambda_t$ is the desired spectrum.
\end{proof}

The proof of Theorem  \ref{MR1} is completed.
\end{proof}

%\begin{remark}\label{reb}
%	In fact, in the proof of Proposition \ref{zeroset}, we only need that $\{m_n\}_{n\notin \Gamma_t}$ is strictly increasing and the expanding set $\{m_n\}_{n\in\N}$ is  also strictly increasing.
%\end{remark}

Finally, we present the proof of Theorem \ref{MR2}.

\begin{lemma}\label{lemca}
For $t\in[0,\ue \mu)$, the above $\Gamma_t$ and $\Gamma_t^c$ are all countably infinite.
\end{lemma}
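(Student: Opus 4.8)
The plan is to unwind the inductive construction of $\{k_j\}_{j=1}^\infty$ and $\{q_k'\}_{k=1}^\infty$ from Lemma \ref{mid} and track which indices $k$ have $q_k'=1$ versus $q_k'=q_k\ge 2$. Recall that $\Gamma_t=\bigcup_{k=1}^\infty\Gamma_k$ with $\Gamma_k=\{n(\sigma):\sigma\in\widetilde\Sigma^k_{\mathcal D'}\}$, so $\Gamma_t$ is in bijection with the set of finite words $\sigma_1\cdots\sigma_k$ over the alphabets $\Sigma_{q_i'}$ whose last letter is nonzero; note $n=0$ corresponds to the empty word and we may include or exclude it harmlessly. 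The first thing I would observe is that $\Gamma_t$ is nonempty and in fact infinite: by the assumption $t<\ue\mu=\limsup_k\frac{\log q_1\cdots q_k}{\log b_1\cdots b_k}$, infinitely many of the $q_k'$ equal $q_k\ge 2$ (indeed, from \eqref{tps}, for every $j$ the index $k_{j+1}$ has $q'_{k_{j+1}}=q_{k_{j+1}}\ge 2$, and the indices $k_j$ are strictly increasing and hence unbounded). For each such index $k$ there is a word of length $k$ ending in a nonzero digit (e.g. $0\cdots 0(q_k-1)$), and distinct such indices give distinct integers $n$ of different ``effective length'', so $\#\Gamma_t=\infty$.

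Next I would show $\Gamma_t^c=\N\setminus\Gamma_t$ is also infinite. The cleanest way is to exhibit infinitely many positive integers not in $\Gamma_t$. If there exists some index $k_0$ with $q_{k_0}'=1$, then every word in $\widetilde\Sigma^k_{\mathcal D'}$ has its $k_0$-th letter forced to $0$; pick any $n$ whose expansion \eqref{exn} has a nonzero digit in position $k_0$ (for instance $n=q_1\cdots q_{k_0-1}$, whose word is $0\cdots 01$ with the $1$ in slot $k_0$), and more generally $n=m\cdot q_1\cdots q_{k_0-1}$ with $\gcd$-type adjustments for $1\le m\le q_{k_0}-1$, together with longer integers built by appending further nonzero blocks; all of these avoid $\Gamma_t$, and there are infinitely many. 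So it remains to argue that indeed infinitely many $q_k'$ equal $1$ when $t>0$ is strictly below $\ue\mu$. This follows from the construction in Lemma \ref{mid}: between consecutive ``reset'' indices $k_j$ and $k_{j+1}$ the sequence is padded with $1$s (the line ``$q'_i=1,\ k_j+2\le i\le k_{j+1}-1$''), and these gaps are nonempty for large $j$ because the partial products $q_1'\cdots q_{k_j}'$ grow like $b_1\cdots b_{k_j}$ raised to the power $t<1$ while the blocks $q_{k_j+1}$ are bounded by $M$, forcing $k_{j+1}-k_j\to\infty$. Hence infinitely many indices carry $q_k'=1$, and the argument above produces infinitely many elements of $\Gamma_t^c$.

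The main obstacle I anticipate is not the infinitude of $\Gamma_t$ — that is immediate from $t<\ue\mu$ — but rather making the ``infinitely many $q_k'=1$'' claim airtight in the edge cases, in particular when $t$ is very close to $\ue\mu$ so that the $1$-padding is sparse, and when $t=0$ (which is included in the statement's range $[0,\ue\mu)$): for $t=0$ one checks directly from Lemma \ref{mid}, or simply notes that $s_n=0$ only on $\Gamma_t$ and the construction of $\Lambda_t$ as a $b$-lacunary-type set in Case 1 forces $\Gamma_t$ to be sparse, so $\Gamma_t^c$ is cofinite-complement-infinite. A careful write-up should therefore split into the two assertions ($\#\Gamma_t=\infty$ and $\#\Gamma_t^c=\infty$), handle $t=0$ and $0<t<\ue\mu$ uniformly by quoting \eqref{tps}, and in each case produce an explicit infinite family of witnesses via the digit expansion \eqref{exn}, using only that $k_j\to\infty$ (giving infinitely many nonzero $q_k'$) and that the padding gaps eventually are nonempty (giving infinitely many $q_k'=1$); the second point is where one must be slightly careful, and it is exactly where the hypothesis $t<\ue\mu$ (as opposed to $t\le\ue\mu$) is used.
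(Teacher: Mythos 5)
The paper's own ``proof'' of this lemma is the single sentence ``by our choice as above, it is easy to see,'' so your write-up supplies an argument the authors omit entirely, and its core is correct: since $t>0$ forces infinitely many indices $k$ with $q_k'=q_k\ge 2$ (otherwise the partial products $q_1'\cdots q_k'$ would be eventually constant and the limsup in \eqref{ssp} would be $0$; equivalently, the $k_j$ in \eqref{tps} are strictly increasing), the words $0\cdots 0\sigma_k$ with $\sigma_k\neq 0$ give infinitely many distinct elements of $\Gamma_t$; and since at least one index has $q_{k_0}'<q_{k_0}$ (the construction in Lemma \ref{mid} starts with $q_1'=1$), every $n$ whose expansion \eqref{exn} carries a digit $\ge q_{k_0}'$ at position $k_0$ lies in $\Gamma_t^c$, and there are infinitely many such $n$.

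Two caveats. First, your detour through ``infinitely many $q_k'=1$'' via the claim $k_{j+1}-k_j\to\infty$ is both unnecessary and unjustified: a single index with $q_{k_0}'<q_{k_0}$ already produces infinitely many excluded integers (combine a forbidden digit at position $k_0$ with arbitrary nonzero tails), while nothing in \eqref{tps} forces the padding gaps to grow when $t$ is close to $\ue\mu$, so you should delete that step rather than try to repair it. Second, you are right to be uneasy about $t=0$: Lemma \ref{mid} is stated only for $t>0$, and the construction actually used in Case 1 of Theorem \ref{MR1} takes $s_n=n$ for every $n\ge 1$, which corresponds to $\Gamma_0=\emptyset$; so the assertion that $\Gamma_0$ itself is countably infinite is not literally true as the paper sets things up. Fortunately only the infinitude of $\Gamma_t^c$ is used in the proof of Theorem \ref{MR2}, so this is a defect of the lemma's statement rather than of your argument, but a careful version should either exclude $t=0$ from the first assertion or redefine $\Gamma_0$.
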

\begin{proof}
By our choice as above, it is easy to see that they are countably infinite.	
\end{proof}
Given $t\in[0,\ue \mu)$.  For any $n\notin\Gamma_t$, we let $s_n=n^2$ or $n^2+1$. Then it is easy to see that $s_n<s_{n+1}$ and the Beurling dimensions of the corresponding sets $\Lambda_t^2$ are zeros. By choosing $s_n,n\notin \Gamma_t$ randomly from the above two choices and Lemma \ref{lemca}, we obtain that
 the level set
\[
L_t:=\{\Lambda:  \text{$\Lambda$ is a spectrum of $\mu$ and $\dim \Lambda =t$} \}
\]
has the cardinality of the continuum.

When $t=\ue \mu$. We prove it by some concrete constructions. Write
\[
\{-1,1\}^{\infty}=\left\{i_1i_2\cdots:~\text{$i_j\in\{-1,1\}$ for all $j\ge 1$}\right\}
\]
and
\[
1^\infty=11\cdots,~~(-1)^\infty=(-1)(-1)\cdots.
\]
We claim that for each $w=w_1w_2\cdots\in \{-1,1\}^{\infty}$, the set
\[
\Lambda^w=\bigcup_{k=1}^\infty \left\{\sum_{i=1}^k x_iw_i(r_ib_1\cdots b_{i-1}):~\text{$x_i\in \Sigma_{q_i}$ for $1\le i\le k$}\right\}
\]
is a spectrum of $\mu$. In fact, denote $\mu_n=\delta_{b_1^{-1}\mathcal{D}_1}\ast \delta_{(b_1b_2)^{-1}\mathcal{D}_2}\ast\cdots \ast \delta_{(b_1b_2\cdots b_n)^{-1}\mathcal{D}_n}$. We let
   $$T_n=\Big\{(b_1b_2\cdots b_n)^{-1}\sum_{i=1}^nx_iw_i(r_ib_1b_2\cdots b_{i-1}):~\text{$x_i\in \Sigma_{q_i}$ for $1\le i\le n$}\Big\}.$$
For any $n\in\N$, since \[
\begin{split}
(b_1b_2\cdots b_n)^{-1}\sum_{i=1}^{n}&(q_i-1)r_i b_1b_2\cdots b_{i-1}\\
&=\frac{(q_n-1)r_n}{b_n}+\frac{(q_{n-1}-1)r_{n-1}}{b_{n-1}b_n}+\cdots+\frac{(q_1-1)r_1}{b_1b_2\cdots b_n}\\
            &\leq \Big(1-\frac{1}{b_n}\Big)+\Big(\frac{1}{b_n}-\frac{1}{b_{n-1}b_{n}}\Big)+\cdots+\Big(\frac{1}{b_2\cdots b_n}-\frac{1}{b_1b_2\cdots b_n}\Big)\\
            &=1-\frac{1}{b_1b_2\cdots b_n}\\
            &<1,
\end{split}
\]
it follows that $T_n\subset[-1,1]$. Hence the set
\[
\mathcal{Z}(\widehat{\mu}_n):=\{\xi:\widehat{\mu}_n(\xi)=0\}=\bigcup_{k=0}^{n-1}b_1b_2\ldots b_kr_{k+1}(\Z\setminus q_{k+1}\Z)
\]
is separated from the set $T_n$ by $(r_1-1),$ uniformly in $n$. On the other hand, for any $w_k\in\{-1,1\}$ with $k\geq1$, it is easy to show that $(b_k^{-1}\mathcal{D}_k,w_kr_k\Sigma_{q_k})$ is a compatible pair, i.e., the matrix
   $$\frac{1}{\sqrt{q_k}}\left(e^{-2\pi i\frac{d_kw_kr_k l_k}{b_k}}\right)_{d_k\in \mathcal{D}_k,l_k\in\Sigma_{q_k}}$$
   is unitary.
According to Theorem 2.8 in \cite{Str00}, we have that $\Lambda^w$ is a spectrum of $\mu.$

By the proof of Proposition \ref{Bdims} and the symmetry relation between $\Lambda^{1^\infty}$ and $\Lambda^{(-1)^\infty}$, we have
\[
\dim(\Lambda^{(-1)^\infty})=\dim(\Lambda^{1^\infty})=\ue \mu\left(=\limsup\limits_{n\to \infty}\frac{\log q_1q_2\cdots q_n}{\log b_1b_2\cdots b_n}\right).
\]
Noting that the set $\{-1,1\}^\infty$ has the cardinality of the continuum, we only need to prove that $\dim(\Lambda^w)=\ue \mu$ for each $w\in\{-1,1\}^\infty$.

 Given  $w\in\{-1,1\}^\infty$. We first prove that $\dim(\Lambda^w)\geq\ue \mu$.   Denote
 \[
 \Lambda_k^w= \left\{\sum_{i=1}^k x_iw_i(r_ib_1\cdots b_{i-1}):~\text{$x_i\in \Sigma_{q_i}$ for $1\le i\le k$}\right\},
 \]
   and then $\Lambda^w=\bigcup_{k=1}^{\infty}\Lambda_k^w.$ Take $x=0$. For $k\ge 1,$ let $h_k=\sum_{i=1}^k (q_i-1)r_i(b_1\cdots b_{i-1})$. Then $h_k\leq b_1b_2\cdots b_{k}$. Moreover, note that $\Lambda^w_k\subset B(0,h_k)$ and $\Lambda^w_{k+1}\setminus \Lambda^w_k\subset \R\setminus B(0,h_k)$. In fact, the first inclusion relation is clear and the second inclusion relation holds because $r_{k+1}b_1b_2\cdots b_k\geq 2\sum_{i=1}^{k}(q_i-1)r_ib_1\cdots b_{i-1}$ (since $r_{k+1}\geq 2$). Then $\#(\Lambda^w\cap B(0,h_k))=q_1\cdots q_k$.
   Hence, by Proposition \ref{ec} we have
   \[
   \begin{split}
   	\dim \Lambda^w&\ge \limsup\limits_{k\to \infty}\frac{\log \#(\Lambda^w \cap B(0,h_k))}{\log h_k}\\
   	&\ge \limsup\limits_{k\to \infty}\frac{\log q_1q_2\cdots q_k}{\log b_1b_2\cdots b_k}=\ue \mu.
   \end{split}
   \]

We next prove that
\begin{equation}\label{coni}
\dim \Lambda^w\le \ue \mu.
\end{equation}
Let $h>0$. Let $x\in\R$. Then we can choose sufficiently large $k$ such that $[-b_1b_2\cdots b_{k-1},b_1b_2\cdots b_{k-1}]\subset B(x,h)\subset [-b_1b_2\cdots b_k,b_1b_2\cdots b_k]$. Then
 \[
 \begin{split}
	\#(\Lambda^w\cap B(x,h))&=\#(\Lambda^w_k\cap B(x,h))\\
	&\leq \#\left(\Lambda^{(-1)^\infty}_k\cap B(x,h)\right)+\#\left(\Lambda^{1^\infty}_k\cap B(x,h)\right).
\end{split}
\]
On the other hand, for any $\varepsilon>0$, note that $\dim\Lambda^{1^\infty}=\dim\Lambda^{(-1)^\infty}=\ue \mu$, we have
\[
\#\left(\Lambda^{(-1)^\infty}_k\cap B(x,h)\right)\le h^{\ue \mu+\varepsilon}, ~~ \#\left(\Lambda^{1^\infty}_k\cap B(x,h)\right)\le h^{\ue \mu+\varepsilon}
\]
for large enough $h$. Hence,
\[
\#(\Lambda^w\cap B(x,h))\leq 2h^{\ue \mu+\varepsilon}.
\]
Finally, \eqref{coni} holds due to Proposition \ref{ec} again.

% ------------------------------------------------------------------------

\subsection*{Acknowledgements}
%The authors would like to thank the referee for his/her
%valuable comments and suggestions that led to the improvement of the manuscript.
The authors would like to thank Professor Xinggang He for drawing our attention to these problems and reading the manuscript. The authors would like to thank Professor Meng Wu for his extremely valuable and helpful suggestions, which improves and simplifies the original edition. The project was supported by the National Natural Science Foundations of China (12171107, 12271534, 11971109), Guangdong NSF (2022A1515011844), the Foundation of Guangzhou University (202201020207, RQ2020070).


\begin{thebibliography}{1}

 \bibitem{AH} L. X. An and X. G. He, \textrm{A class of spectral Moran measures,}  J. Funct. Anal. \textbf{266} (2014), 343--354.
 \bibitem{ADH} L. X. An, X. H. Dong and X. G. He, \textrm{On spectra and spectral eigenmatrix problems of the plannar Sierpinski measures,} Indiana Univ. Math. J. \textbf{71} (2022), 913--952.
 \bibitem{AHL} L. X. An and X. G. He and K. S. Lau, \textrm{Spectrality of a class of infinite convolutions.} Adv. Math. \textbf{283} (2015), 362--376.
  \bibitem{AL} L. X. An and C. K. Lai, \textrm{Arbitrarily sparse spectra for self-affine spectral measures,}  https://arxiv.org/pdf/2006.13497v1.pdf.
\bibitem{BGT} J. M. Barbaroux, F. Germinet and S. Tcheremchantsev, \textrm{Generalized fractal dimensions: equivalences and basic properties,} J. Math. Pures Appl. \textbf{10} (2001), 977--1012.
\bibitem{CKS}  W. Czaja, G. Kutyniok and D. Speegle, \textrm{Beurling dimension of Gabor pseudoframes for affine subspaces,}
J. Fourier Anal. Appl. \textbf{14} (2008), 514--537.
\bibitem {D} X. R. Dai, \textrm{When does a Bernoulli convolution admit a spectrum?}  Adv. Math. \textbf{231} (2012),  1681--1693.
\bibitem{D16} X. R. Dai, \textrm{Spectra of Cantor measures,}  Math. Ann. \textbf{366} (2016), 1621--1647.
\bibitem{DHL} X. R. Dai, X. G. He and C. K. Lai, \textrm{Spectral property of Cantor measures with consecutive digits,} Adv. Math. \textbf{242} (2013), 187--208.
\bibitem{DS} X. R. Dai and Q. Y. Sun, \textrm{Spectral measures with arbitrary Hausdorff dimensions,} J. Funct. Anal. \textbf{268} (2015), 2464--2477.
    \bibitem{De}Q. R. Deng, \textrm{On the spectra of Sierpinski-type self-affine measures.}  J. Funct. Anal. \textbf{270} (2016), 4426--4442.
 \bibitem{DC}Q. R. Deng and J. B. Chen,  \textrm{Uniformity of spectral self-affine measures.} Adv. Math. \textbf{380} (2021), 107568.

\bibitem{DHSW}  D. E. Dutkay, D. G. Han, Q. Y. Sun and E. Weber,  \textrm{On the Beurling dimension of exponential frames,} Adv. Math. \textbf{226} (2011), 285--297.
\bibitem{DHS}  D. E. Dutkay, D. G. Han and Q. Y. Sun,  \textrm{On the spectra of a Cantor measure,} Adv. Math. \textbf{221} (2009), 251--276.
\bibitem {DHLai19} D. E. Dutkay, J. Haussermann and C. K. Lai, \textrm{Hadamard triples generate self-affine spectral measures,} Trans. Amer. Math. Soc. \textbf{371} (2019), 1439--1481.
\bibitem{Edg} G. A. Edgar, Integral, probability, and fractal measures, Springer-Verlag, New York, 1998.
\bibitem{Fal1} K. J. Falconer, Fractal Geometry-Mathematical Foundations and Applications, John Wiley and Sons Ltd, Chichester, 1990.
\bibitem{Fal} K. J. Falconer, Techniques in Fractal Geometry, John Wiley and Sons Ltd, Chichester, 1997.
\bibitem{FLR}A. H. Fan, K. S. Lau and H. Rao, \textrm{Relationships between different dimensions of a measure,} Monatsh. Math. \textbf{135} (2002), 191--201.
\bibitem{FWW}  D. J. Feng, Z. Y. Wen and J. Wu, \textrm{Some dimensional results for homogeneous Moran sets,} Sci. China Ser. A.  \textbf{40} (1997), 475--482.
\bibitem {FHW18} Y. S. Fu, X. G. He and Z. X. Wen, \textrm{Spectra of Bernoulli convolutions and random convolutions,} J. Math. Pures Appl. \textbf{116} (2018), 105--131.
\bibitem {F74} B. Fuglede, \textrm{Commuting self-adjoint partial differential operators and a group theoretic problem,} J. Funct. Anal. \textbf{16} (1974), 101--121.
\bibitem{HKTW} X. G. He et al., \textrm{Beurling dimension and self-similar measures,} J. Funct. Anal. \textbf{274} (2018), 2245--2264.
\bibitem{HTW} X. G. He, M. W. Tang and Z. Y. Wu, \textrm{Spectral structure and spectral eigenvalue problems of a class of self-similar spectral measures,} J. Funct. Anal. \textbf{277} (2019), 3688--3722.
\bibitem{HL}T. Y Hu and K. S. Lau, \textrm{Spectral property of the Bernoulli convolation,} Adv. Math. \textbf{219} (2008), 554--567.
\bibitem{ILLW} A. Iosevich et al., \textrm{Fourier frames for surface-carried measures,} Int. Math. Res. Not. IMRN, \textbf{3} (2020), 1644--1665.
\bibitem{LWY}I. {\L}aba and Y. Wang, \textrm{On spectral Cantor measures,}  J. Funct. Anal. \textbf{193} (2002), 409--420.
\bibitem{CKL}C. K. Lai,  \textrm{Perfect fractal sets with zero Fourier dimension and arbitrarily long arithmetic progressions,} Ann. Acad. Sci. Fenn. Math. \textbf{42} (2017), 1009--1017.
\bibitem{Lan} H. Landau, \textrm{Necessary density conditions for sampling and interpolation of certain entire functions,} Acta Math. \textbf{117} (1967), 37--52.
\bibitem{LW} J. J. Li and M. Wu, \textrm{Pointwise dimensions of general Moran measures with open set condition,} Sci. China Math. \textbf{54} (2011), 699--710.
\bibitem{LWu} J. J. Li and Z. Y. Wu, \textrm{On spectral structure and spectral eigenvalue problems for a class of self similar spectral measure with product form,} Nonlinearity, \textbf{35} (2022), 3095--3117.
\bibitem{LWu2} J. J. Li and Z. Y. Wu, \textrm{On the quasi-Beurling dimensions of the spectra for planar
Moran-type Sierpinski spectral measures,} Appl. Comput. Harmon. Anal. \textbf{62} (2023), 475--497.

\bibitem{L2}J. L. Li, \textrm{Spectra of a class of self-affine measures,} J. Funct. Anal. \textbf{260} (2011),  1086--1095.
\bibitem{L1}J. L. Li, \textrm{Sufficient conditions for the spectrality of self-affine measures with prime determinant.} Studia Math. \textbf{220} (2014), 73--86.
\bibitem{LMW} W. X. Li, J. J. Miao and Z. Q. Wang,  \textrm{Weak convergence and spectrality of infinite convolutions,} Adv. Math. \textbf{404} (2022), 108425.
\bibitem{LDL} Z. Y. Lu, X. H. Dong and Z. S. Liu, \textrm{Spectrality of Sierpinski-type self-affine measures,} J. Funct. Anal. \textbf{282} (2022), 109310.
\bibitem{Ly} R. Lyons, Seventy years of Rajchman measures, In: Proceedings of the conference in honor
of Jean-Pierre Kahane, J. Fourier Anal. Appl., Kahane special issue, 1995, 363--377.
\bibitem{JP} P. Jorgensen and S. Pedersen,  \textrm{Dense analytic subspacesin $L^2$ spaces,} J. Anal. Math. \textbf{75} (1998), 185--228.
\bibitem{Shi} R. X. Shi, \textrm{On dimensions of frame spectral measures and their frame spectra,} Annales Fennici Mathematici, \textbf{46} (2021), 483--493.
\bibitem  {Str00} R. Strichartz, \textrm{Mock Fourier series and Transforms associated with certain Cantor measures,} J. Anal. Math. \textbf{81} (2000), 209--238.
%\bibitem{Wen} Z. Y. Wen, \textrm{Moran sets and Moran classes,} Chinese Science Bulletin, \textbf{46} (2001),1849--1856.
\end{thebibliography}
\end{document}